\def\so{\mathfrak{so}}
\def\SO{\mathrm{SO}}
\def\su{\mathfrak{su}}
\def\hh{\mathfrak{h}}
\def\tt{\mathfrak{t}}
\def\mm{\mathfrak{m}}
\def\gg{\mathfrak{g}}
\def\Cl{\mathrm{Cl}}
\def\spin{\mathfrak{spin}}
\def\Spin{\mathrm{Spin}}
\def\G{\mathrm{G}}
\def\rr{r}
\def\f{\varphi}
\def \PM{\mathbb{P}}
\def \RM{\mathbb{R}}
\def \Ca{\mathbb{O}}
\def \ZM{\mathbb{Z}}
\def \CM{\mathbb{C}}
\def \HM{\mathbb{H}}
\def\E{\mathrm{E}}
\def\F{\mathrm{F}}
\def\A{\mathcal{A}}
\def\G{\mathcal{G}}
\def\B{\mathcal{B}}
\def\1{\mathbf{1}}
\def\#{\sharp}
\def\ad{\mathrm{ad}}
\def\C{\mathbb{C}}
\def\e{\varepsilon}
\def\End{\mathrm{End}\,}
\def\h{\mathfrak{h}}
\def\H{\mathbb{H}}
\def\K{\mathbb{K}}
\def\id{\mathrm{id}}
\def\l{\lambda}
\def\m{\mathfrak{m}}
\def\R{\mathbb{R}}
\def\rk{\mathrm{rk}}
\def\S{\mathrm{\Sigma}}
\def\W{\mathfrak{W}}
\def\<#1,#2>{\langle\,#1,\,#2\,\rangle}
\def\beq{\begin{equation}}
\def\eeq{\end{equation}}
\def\norm(#1){\|#1\|^2}
\def\rectangle(#1,#2)[#3,#4]#5{
 \multiput(#1,#2)(#3,0)2{\line(0,1){#4}}\multiput(#1,#2)(0,#4)2{\line(1,0){#3}}
 \put(#1,#2){\vbox to #4pt{\hbox to #3pt{\hfill}\vfill}}}
\def\recttext(#1,#2)[#3,#4]#5{\put(#1,#2)
 {\vbox to #4pt{\vfill\hbox to #3pt{\hss#5\hss}\vfill}}}
\newtheorem{Lemma}{Lemma}[section]
\newtheorem{Proposition}[Lemma]{Proposition}
\newtheorem{Theorem}[Lemma]{Theorem}
\theoremstyle{definition}
\newtheorem{Definition}[Lemma]{Definition}
\newtheorem{Remark}[Lemma]{Remark}
\title{Higher rank homogeneous Clifford structures}
\author{Andrei Moroianu and Mihaela Pilca}
\thanks{This work was supported 
by the contract ANR-10-BLAN 0105 ``Aspects Conformes de la
G{\'e}om{\'e}trie''. The second-named author acknowledges 
also partial support from the 
CNCSIS grant PNII IDEI contract 529/2009 and thanks
the Centre de Math{\'e}matiques de 
l'{\'E}cole Polytechnique for hospitality 
during the preparation of this work.}
\address{Andrei Moroianu \\ Universit\'e de Versailles-St Quentin \\
Laboratoire de Math\'ematiques \\ UMR 8100 du CNRS\\
45 avenue des \'Etats-Unis\\
78035 Versailles, France }
\email{am@math.polytechnique.fr}
\address{Mihaela Pilca\\Fakult\"at f\"ur Mathematik\\
Universit\"at Regensburg\\Universit\"atsstr. 31 
D-93040 Regensburg, Germany
\emph{and} 
Institute of Mathematics ``Simion Stoilow" of the Romanian Academy, 
21, Calea Grivitei Str.
010702-Bucharest, Romania}
\email{Mihaela.Pilca@mathematik.uni-regensburg.de}
\begin{document}

\begin{abstract}
We give an upper bound for the rank $r$ of homogeneous (even) Clifford structures on compact 
manifolds of non-vanishing Euler characteristic. More precisely, we show that if $r=2^a\cdot b$ with $b$ odd, 
then $r\le 9$ for $a=0$, $r\le 10$ for $a=1$, $r\le 12$ for $a=2$ and $r\le 16$ for $a\ge 3$. 
Moreover, we describe the four limiting cases and show that there is exactly one solution in each case.

\medskip

\noindent 2010 {\it Mathematics Subject Classification}: Primary: 53C30, 53C35, 53C15. Secondary: 17B22

\smallskip

\noindent {\it Keywords}: Clifford structures, homogeneous spaces, exceptional Lie groups, root systems.
\end{abstract}
\maketitle

\section{Introduction}

The notion of (even) Clifford structures on Riemannian manifolds
was introduced in \cite{ms}, motivated by the study of Riemannian manifolds
with non-trivial curvature constancy (cf. \cite{gray}). They generalize almost Hermitian and
quaternionic-Hermitian structures and are in some sense dual to spin structures. More precisely:

\begin{Definition}[\cite{ms}] \label{d1}
A rank $\rr$ {\em Clifford structure} ($\rr\ge 1$) on a Riemannian
manifold $(M^n,g)$ is an oriented rank $\rr$ Euclidean bundle
$(E,h)$ over $M$ together with an algebra bundle
morphism $\f:\Cl(E,h)\to \End(TM)$
which maps $E$ into the bundle of skew-symmetric endomorphisms
$\End^-(TM)$.

A rank $\rr$ {\em even Clifford structure} ($\rr\ge 2$) on
$(M^n,g)$ is an oriented rank $\rr$ Euclidean bundle
$(E,h)$ over $M$ together with an algebra bundle
morphism $\f:\Cl^0(E,h)\to \End(TM)$
which maps $\Lambda^2E$ into the bundle of skew-symmetric endomorphisms
$\End^-(TM)$.
\end{Definition}

It is easy to see that every rank $\rr$ Clifford structure is in particular
a rank $\rr$ even Clifford structure, so the latter notion is more flexible.

In general, there exists no upper bound for the rank of a Clifford structure. In fact, Joyce
provided a method (cf. \cite{jo}) to construct non-compact manifolds with arbitrarily large Clifford
structures. However, in many cases the rank is bounded by above. For instance, the Riemannian
manifolds carrying {\em parallel} (even) Clifford structures (in the sense that $(E, h)$ has a metric
connection making the Clifford morphism $\f$ parallel) were classified in \cite{ms} and it turns out
that the rank of a parallel Clifford structure is bounded by above if the manifold is nonflat:
Every parallel Clifford structure has rank $\rr\le 7$ and every parallel even Clifford structure
has rank $\rr\le 16$ (cf. \cite[Thm. 2.14 and 2.15]{ms}).
The list of manifolds with parallel even Clifford structure of rank $\rr\ge 9$ only contains
four entries, the so-called Rosenfeld's elliptic projective planes $\Ca \PM^2$,
$(\CM\otimes
\Ca) \PM^2$, $(\HM\otimes \Ca) \PM^2$ and $(\Ca\otimes \Ca) \PM^2$, which
are inner symmetric spaces associated to
the exceptional simple Lie groups $\F_4$, $\E_6$, $\E_7$ and $\E_8$ (cf. \cite{r}) and have Clifford rank
$\rr=9,\ 10,\ 12$ and $16$, respectively.

A natural related question is then to look for
{\em homogeneous} (instead of parallel) even Clifford structures on homogeneous
spaces $M=G/H$. We need to make some restrictions on $M$ in order to obtain relevant results. 
On the first hand, we assume $M$ to be compact (and thus $G$ and $H$ are compact, too). 
On the other hand, we need to assume that $H$ is not too small. For example, in the degenerate case when $H$ is 
just the identity of $G$, the tangent bundle of $M=G$ is trivial, and the unique obstruction 
for the existence of a rank $r$ (even) Clifford structure is that the dimension of $G$ has
to be a multiple of the dimension of the irreducible representation of the Clifford algebra $\Cl_r$ or $\Cl_r^0$.
At the other extreme, we might look for homogeneous spaces $M=G/H$ with $\rk(H)=\rk(G)$, or, equivalently, $\chi(M)\ne 0$.
The main advantage of this assumption is that we can choose a common 
maximal torus of $H$ and $G$ and identify the root system of $H$ with a subset of the root system of $G$.

In this setting, the system of roots of $G$ is made up of the
system of roots of $H$ and the weights of the (complexified) isotropy representation,
which are themselves related to the weights of some spinorial representation if $G/H$ carries a 
homogeneous even Clifford structure. We then show that the very special configuration of the
weights of the spinorial representation $\Sigma_r$ is not compatible with the usual integrality 
conditions of root systems, provided that $r$ is large enough.

The main results of this paper are Theorem \ref{upperbd}, where we obtain upper bounds on $r$ 
depending on its 2-valuation, and Theorem \ref{h-types}, where we study the limiting cases 
$\rr=9,\ 10,\ 12$ and $16$ and show that they correspond to the symmetric spaces 
$\mathrm{F}_4/\mathrm{Spin}(9)$, $\mathrm{E}_6/(\mathrm{Spin}(10)\times\mathrm{U}(1)/\ZM_4)$, 
$\mathrm{E}_7/\mathrm{Spin}(12)\cdot\mathrm{SU}(2)$ and  $\mathrm{E}_8/(\mathrm{Spin}(16)/\mathbb{Z}_2)$.

We believe that our methods could lead to a complete classification of homogeneous 
Clifford structures of rank $r\ge 3$ on compact manifolds with non-vanishing Euler 
characteristic, eventually showing that they are all symmetric, thus parallel 
(cf. \cite[Table 2]{ms}), but a significantly larger amount of work is needed, especially for lower ranks.

\section{Preliminaries on Lie algebras and root systems}

For the basic theory of root systems we refer to \cite{a69} and \cite{s}.

\begin{Definition}\label{sysroot}
A set $\mathcal{R}$ of vectors in a Euclidean space $(V,\<\cdot,\cdot>)$ is called a
\emph{system of roots}  if it satisfies the following
conditions:
\begin{description}
\item[R1] $\mathcal{R}$ is finite, $\mathrm{span}(\mathcal{R})=V$,
  $0\notin \mathcal{R}$.
\item[R2] If $\alpha\in \mathcal{R}$, then the only multiplies of
  $\alpha$ in $\mathcal{R}$ are $\pm\alpha$.
\item[R3] $\frac{2\<\alpha,\beta>}{\<\alpha,\alpha>}\in\mathbb{Z}$,
  for all $\alpha, \beta\in \mathcal{R}$.
\item[R4] $s_\alpha:\mathcal{R}\to \mathcal{R}$, for all $\alpha\in
  \mathcal{R}$ ($s_\alpha$ is the reflection $s_\alpha:V\to
  V$, $s_\alpha(v):=v-\frac{2\<\alpha,v>}{\<\alpha,\alpha>}\alpha$).
\end{description}
\end{Definition}

\begin{Remark}[Properties of root systems]
Let $\mathcal{R}$ be a system of roots. If
$\alpha,\beta\in\mathcal{R}$ such that $\beta\neq\pm\alpha$ and
$\norm(\beta)\geq\norm(\alpha)$, then
\begin{equation}\label{prodscal}
\frac{2\<\alpha,\beta>}{\<\beta,\beta>}\in\{0,\pm1\}.
\end{equation}
If $\<\alpha,\beta>\neq 0$, then the following values are possible:
\begin{equation}\label{normscal}
\left(\frac{\norm(\beta)}{\norm(\alpha)},
  \frac{2\<\alpha,\beta>}{\<\alpha,\alpha>}\right)\in\{(1,\pm
1),(2,\pm 2),(3,\pm 3)\}.
\end{equation}
Moreover, in this case, it follows that
\begin{equation}\label{newroots}
\beta-\mathrm{sgn}\left(\frac{2\<\alpha,\beta>}
  {\<\alpha,\alpha>}\right)k\alpha\in\mathcal{R},  \quad \text{ for }
k\in\mathbb{Z}, 1\leq k\leq \biggl|\frac{2\<\alpha,\beta>}{\<\alpha,\alpha>}\biggr|.
\end{equation}
\end{Remark}

We shall be interested in special subsets of systems of roots and
consider the following notions.

\begin{Definition}\label{subsys}
A set $\mathcal{P}$ of vectors in a Euclidean space $(V,\<\cdot,\cdot>)$
is called a \emph{subsystem of roots} if it generates $V$ and is contained in a system of roots  of $(V,\langle\cdot,\cdot\rangle)$.
\end{Definition}

It is clear that any subsystem
of roots $\mathcal{P}$ is included into a minimal system of roots
(obtained by taking all possible reflections), which we denote by
$\overline{\mathcal{P}}$.

Let $\mathcal{P}$ be a subsystem of roots of $(V,\<\cdot,\cdot>)$. 
An {\em irreducible component} of $\mathcal{P}$ is a minimal non-empty subset 
$\mathcal{P}'\subset \mathcal{P}$ such that $\mathcal{P}'\perp (\mathcal{P}\setminus \mathcal{P}')$.
By rescaling the scalar product $\<\cdot,\cdot>$ on the subspaces generated by
the irreducible components of $V$ one can always assume that the root of maximal length of each irreducible component
of ${\mathcal{P}}$ has norm equal to $1$.

\begin{Definition}\label{asubsys}
A subsystem of roots $\mathcal{P}$ in a Euclidean space $(V,\<\cdot,\cdot>)$ is called
\emph{admissible} if $\overline{\mathcal{P}}\setminus\mathcal{P}$ is a
system of roots.
\end{Definition}

For any $q\in\mathbb{Z}$, $q\geq 1$, let $\mathcal{E}_q$ denote the
set of all $q$-tuples $\varepsilon:=(\varepsilon_1, \dots,
\varepsilon_q)$ with $ \varepsilon_j\in\{\pm1\},\ 1\le j\le q$.
The following result will be used several times in the next section.

\begin{Lemma}\label{admis}
Let $q\in\mathbb{Z}$, $q\geq 1$ and $\{\beta_j\}_{j=\overline{1,q}}$ 
be a set of linearly independent
vectors in a Euclidean space $(V,\<\cdot,\cdot>)$. If
$\mathcal{P}\subset\{\overset{q}{\underset{j=1}
{\sum}}\varepsilon_j\beta_j\}_{\varepsilon\in\mathcal{E}_q}$
is an admissible subsystem of roots, then any two vectors in
$\mathcal{P}$ of different norms must be orthogonal.
\end{Lemma}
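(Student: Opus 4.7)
The plan is to argue by contradiction: I suppose $\alpha,\beta\in\mathcal{P}$ satisfy $\|\alpha\|^2<\|\beta\|^2$ but $\langle\alpha,\beta\rangle\neq 0$, and I will exhibit an element of $\overline{\mathcal{P}}\setminus\mathcal{P}$ equal to $\pm\alpha$, contradicting $\alpha\in\mathcal{P}$.

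First I would record the arithmetic constraints. Since $\alpha\neq\pm\beta$, (\ref{normscal}) forces $r:=\|\beta\|^2/\|\alpha\|^2\in\{2,3\}$ and $2\langle\alpha,\beta\rangle/\|\alpha\|^2=\sigma r$ for $\sigma:=\mathrm{sgn}\langle\alpha,\beta\rangle\in\{\pm 1\}$. These combine into the single identity $\sigma\|\beta\|^2=2\langle\alpha,\beta\rangle$, valid uniformly in $r$ and $\sigma$. Next I would exploit the ``sign-vector'' form of elements of $\mathcal{P}$. Writing $\alpha=\sum\varepsilon_j\beta_j$ and $\beta=\sum\eta_j\beta_j$, I set $S:=\{j:\varepsilon_j\neq\eta_j\}$, $\gamma:=\sum_{j\in S}\eta_j\beta_j$ and $\delta:=\sum_{j\notin S}\eta_j\beta_j$, so that $\alpha=-\gamma+\delta$ and $\beta=\gamma+\delta$. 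The linear independence of the $\beta_j$'s combined with $\|\alpha\|^2\neq\|\beta\|^2$ forces both $S$ and its complement to be nonempty (otherwise $\beta=\pm\alpha$).

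The core of the argument is a single reflection. I would set
\[\nu:=\beta-\sigma\alpha\qquad\text{and}\qquad\mu:=\beta-2\sigma\alpha,\]
both of which lie in $\overline{\mathcal{P}}$ by (\ref{newroots}), since $|2\langle\alpha,\beta\rangle/\|\alpha\|^2|=r\ge 2$. Expanded in the $\beta_j$ basis, the pair $(\nu,\mu)$ equals $(2\gamma,\,3\gamma-\delta)$ when $\sigma=+1$ and $(2\delta,\,-\gamma+3\delta)$ when $\sigma=-1$. In each case some $\beta_j$-coefficient is $0$, $\pm 2$ or $\pm 3$, so neither vector has all coefficients in $\{\pm 1\}$ and hence neither lies in $\mathcal{P}$; thus $\mu,\nu\in\overline{\mathcal{P}}\setminus\mathcal{P}$. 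A short expansion, together with the identity $\sigma\|\beta\|^2=2\langle\alpha,\beta\rangle$, yields $2\langle\mu,\nu\rangle=\|\mu\|^2$, so the reflection formula collapses to
\[s_\mu(\nu)=\nu-\mu=\sigma\alpha.\]
Since $\mathcal{P}$ is admissible, $\overline{\mathcal{P}}\setminus\mathcal{P}$ is a root system and is therefore stable under reflections and under negation. Consequently $\alpha\in\overline{\mathcal{P}}\setminus\mathcal{P}$, which contradicts $\alpha\in\mathcal{P}$.

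The main obstacle is finding a pair $(\mu,\nu)$ that functions uniformly across all four combinations of $r\in\{2,3\}$ with $\sigma\in\{\pm 1\}$; once $\mu$ and $\nu$ are chosen as above and the identity $\sigma\|\beta\|^2=2\langle\alpha,\beta\rangle$ is isolated, the reflection miraculously reduces to the tautological difference $\nu-\mu=\sigma\alpha$, and admissibility together with closure under negation delivers the contradiction.
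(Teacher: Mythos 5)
Your proof is correct and follows essentially the same route as the paper's: both produce the two roots $\beta-\sigma\alpha,\ \beta-2\sigma\alpha\in\overline{\mathcal{P}}$, rule them out of $\mathcal{P}$ by inspecting their $\beta_j$-coefficients (using linear independence), and then recover $\pm\alpha$ inside the root system $\overline{\mathcal{P}}\setminus\mathcal{P}$ to contradict admissibility. The only (harmless) differences are that you carry the sign $\sigma$ explicitly instead of normalizing to $\langle\alpha,\beta\rangle>0$, and you verify the reflection identity $2\langle\mu,\nu\rangle=\|\mu\|^2$ directly where the paper computes $\langle\alpha'-\alpha,\alpha'-2\alpha\rangle$ and invokes \eqref{newroots}.
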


\begin{proof}
Assuming the existence of two non-orthogonal vectors of different
norms,  we construct two roots in $\overline{\mathcal{P}}\setminus\mathcal{P}$ 
whose difference is a root in $\mathcal{P}$, thus contradicting the
assumption on $\mathcal{P}$ to be admissible. More precisely, suppose
that $\alpha,\alpha'\in\mathcal{P}$, $\alpha'\neq\alpha$, such that $\<\alpha,\alpha'>> 0$
(a similar argument works if $\<\alpha,\alpha'><0$) and
$\norm(\alpha')>\norm(\alpha)$. From \eqref{normscal}, it follows that
either $\norm(\alpha')=2\norm(\alpha)$ and
$\<\alpha,\alpha'>=\norm(\alpha)$ or $\norm(\alpha')=3\norm(\alpha)$
and $\<\alpha,\alpha'>=\frac{3}{2}\norm(\alpha)$. In both cases
$\bigl|\frac{2\<\alpha',\alpha>}{\<\alpha,\alpha>}\bigr|\geq 2$ and 
\eqref{newroots} implies that $\alpha'-\alpha, \alpha'-2\alpha\in
\overline{\mathcal{P}}$.

We first check that $\alpha'-\alpha, \alpha'-2\alpha\notin
\mathcal{P}$. The coefficients of $\beta_j$ in $\alpha'-\alpha$ and in
$\alpha'-2\alpha$ may take the values $\{0,\pm2\}$, respectively
$\{\pm1, \pm3\}$, for all $j=1,\dots,q$. Since
$\{\beta_j\}_{j=\overline{1,q}}$ are linearly independent, it follows
that $\alpha'-\alpha\notin
\{\overset{q}{\underset{j=1}{\sum}}\varepsilon_j\beta_j\}_
{\varepsilon\in\mathcal{E}_q}$.
Moreover, 
$\alpha'-2\alpha\in\{\overset{q}{\underset{j=1}{\sum}}
\varepsilon_j\beta_j\}_{\varepsilon\in\mathcal{E}_q}$
if and only if the coefficients of each $\beta_j$ in $\alpha'$
and $\alpha$ are equal,
\emph{i.e.}  $\alpha'=\alpha$, which is not possible.

On the other hand,
$\<\alpha'-\alpha,\alpha'-2\alpha>\in\{\norm(\alpha),
\frac{1}{2}\norm(\alpha)\}$ and from \eqref{newroots} and the
admissibility of $\mathcal{P}$, it follows that
$\alpha\in\overline{\mathcal{P}}\setminus\mathcal{P}$, yielding a
contradiction and finishing the proof.  
\end{proof}

Let $G$ be a compact semi-simple Lie group with Lie algebra $\mathfrak{g}$ 
endowed with an $\ad_\gg$-invariant scalar product. 
Fix a Cartan subalgebra $\tt\subset\gg$ and let $\mathcal{R}(\gg)\subset \mathfrak{t}^*$ denote
its system of roots. It is well known that $\mathcal{R}(\gg)$ satisfies the conditions 
in Definition \ref{sysroot}. Conversely,
every set of vectors satisfying the conditions in Definition \ref{sysroot} is the root system of 
a unique semi-simple Lie algebra of compact type.

If $H$ is a closed subgroup of $G$ with $\rk(H)=\rk(G)$, then one may assume that its Lie algebra $\hh$ contains $\tt$, 
so the system of roots $\mathcal{R}(\mathfrak{g})$
is the disjoint union of the root system $\mathcal{R}(\mathfrak{h})$ and the set $\mathcal{W}$ 
of weights of the complexified isotropy representation of the homogeneous space $G/H$. This follows from the fact that the isotropy
representation is given by the restriction to $H$ of the adjoint
representation of $\mathfrak{g}$. 
\begin{Lemma}\label{adm}
The set $\mathcal{W}\subset \mathfrak{t}^*$ is an admissible subsystem of roots. 
\end{Lemma}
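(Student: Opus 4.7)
The plan is to verify two properties: that $\mathcal{W}$ is a subsystem of roots of $\mathfrak{t}^*$, and that $\overline{\mathcal{W}} \setminus \mathcal{W}$ is a system of roots (in its own span).

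For the first property, containment of $\mathcal{W}$ in a root system of $\mathfrak{t}^*$ is automatic, since $\mathcal{W}\subset\mathcal{R}(\mathfrak{g})$ by the decomposition recalled just before the lemma. To show that $\mathcal{W}$ spans $\mathfrak{t}^*$, suppose $X\in\mathfrak{t}$ satisfies $\alpha(X)=0$ for every $\alpha\in\mathcal{W}$. Then $\mathrm{ad}(X)$ annihilates each weight space of $\mathfrak{m}_{\mathbb{C}}$, so $[X,\mathfrak{m}]=0$. A short Jacobi identity computation shows that $\mathfrak{c}:=\{Y\in\mathfrak{h}:[Y,\mathfrak{m}]=0\}$ is stable under $\mathrm{ad}(\mathfrak{h})$, while $[\mathfrak{m},\mathfrak{c}]=0$ by definition, so $\mathfrak{c}$ is an ideal of $\mathfrak{g}=\mathfrak{h}\oplus\mathfrak{m}$ contained in $\mathfrak{h}$. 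Assuming $G$ acts (almost) effectively on $G/H$ this ideal must vanish, forcing $X=0$; hence $\mathcal{W}$ spans $\mathfrak{t}^*$.

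The main content is admissibility, and it rests on the following observation: the Weyl group $W(\mathfrak{h})$ permutes the weights of every finite-dimensional representation of $H$; applied to the complexified isotropy representation of $G/H$ this yields $s_\alpha(\mathcal{W})=\mathcal{W}$ for every $\alpha\in\mathcal{R}(\mathfrak{h})$. Since $\mathcal{R}(\mathfrak{g})$ is closed under all its Weyl reflections and $\mathcal{W}\subset\mathcal{R}(\mathfrak{g})$, the reflection closure $\overline{\mathcal{W}}$ stays inside $\mathcal{R}(\mathfrak{g})$, and together with the disjoint decomposition $\mathcal{R}(\mathfrak{g})=\mathcal{R}(\mathfrak{h})\sqcup\mathcal{W}$ this forces $S:=\overline{\mathcal{W}}\setminus\mathcal{W}\subset\mathcal{R}(\mathfrak{h})$.

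It remains to verify R1--R4 for $S$. For any $\alpha\in S\subset\mathcal{R}(\mathfrak{h})$, the reflection $s_\alpha$ stabilizes $\mathcal{W}$ by the key observation and stabilizes $\overline{\mathcal{W}}$ because the latter is a root system, hence stabilizes their difference $S$; this is R4. The remaining axioms come for free from $S\subset\mathcal{R}(\mathfrak{g})$: $S$ is finite, avoids $0$, and spans its own span (R1); the integer condition R3 is inherited; and R2 follows from R4 applied to $s_\alpha(\alpha)=-\alpha$ together with R2 for $\mathcal{R}(\mathfrak{g})$. The only genuine conceptual obstacle is pinning down the $W(\mathfrak{h})$-invariance of $\mathcal{W}$ and interpreting admissibility in the span of $S$ rather than all of $\mathfrak{t}^*$ (otherwise the case where $\mathfrak{h}$ has a nontrivial centre, as in $\mathrm{E}_6/(\mathrm{Spin}(10)\cdot\mathrm{U}(1)/\ZM_4)$, would fail); everything else amounts to bookkeeping inside the decomposition $\mathcal{R}(\mathfrak{g})=\mathcal{R}(\mathfrak{h})\sqcup\mathcal{W}$.
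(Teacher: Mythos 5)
Your proof is correct, and its core coincides with the paper's: everything hinges on the disjoint decomposition $\mathcal{R}(\gg)=\mathcal{R}(\hh)\sqcup\mathcal{W}$, which places $S:=\overline{\mathcal{W}}\setminus\mathcal{W}$ inside $\mathcal{R}(\hh)$ and identifies it as $\overline{\mathcal{W}}\cap\mathcal{R}(\hh)$. Where you diverge is in how you justify that reflections $s_\alpha$, $\alpha\in S$, preserve $\mathcal{W}$: you invoke the representation-theoretic fact that the Weyl group of $\hh$ permutes the weights of the isotropy representation, whereas the paper stays purely combinatorial, observing that $S$ is the intersection of the two root systems $\overline{\mathcal{W}}$ and $\mathcal{R}(\hh)$ and hence equals its own reflection closure ($\overline{\overline{\mathcal{W}}\cap\mathcal{R}(\hh)}\subseteq\overline{\mathcal{W}}\cap\overline{\mathcal{R}(\hh)}=\overline{\mathcal{W}}\cap\mathcal{R}(\hh)$). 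Both mechanisms are valid; the paper's is shorter and needs nothing beyond the closure calculus, while yours makes explicit \emph{why} $\mathcal{W}$ is reflection-stable (it is a $W(\hh)$-invariant weight set), which is perhaps more conceptual. You could in fact avoid the representation theory entirely by noting that $s_\alpha$ with $\alpha\in\mathcal{R}(\hh)$ preserves both $\mathcal{R}(\gg)$ and $\mathcal{R}(\hh)$, hence their difference $\mathcal{W}$. You also take care of two points the paper leaves implicit: that $\mathcal{W}$ spans $\tt^*$ (via almost-effectiveness of the action) and that the axioms for $S$ are to be read in the span of $S$; both remarks are sound and worth making.
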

\begin{proof}
 Indeed,
$\overline{\mathcal{W}}\setminus\mathcal{W}=\overline{\mathcal{W}}\cap\mathcal{R}(\mathfrak{h})$,
whence $\overline{\overline{\mathcal{W}}\setminus\mathcal{W}}\subset \overline{\mathcal{W}}\cap\overline{\mathcal{R}(\mathfrak{h})}=
\overline{\mathcal{W}}\cap\mathcal{R}(\mathfrak{h})=\overline{\mathcal{W}}\setminus\mathcal{W}$.
\end{proof}

We will now prove a few general results about Lie algebras which will be needed later
on.

\begin{Lemma}\label{sumroots}
Let $\mathfrak{h}_1$ be a Lie subalgebra of a Lie algebra
$\mathfrak{h}_2$ of compact type having the same rank. If $\alpha,\beta\in
\mathcal{R}(\mathfrak{h}_1)$ such that
$\alpha+\beta\in\mathcal{R}(\mathfrak{h}_2)$, then
$\alpha+\beta\in\mathcal{R}(\mathfrak{h}_1)$.
\end{Lemma}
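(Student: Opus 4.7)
The plan is to use the root space decomposition with respect to a common Cartan subalgebra. Since $\mathrm{rk}(\hh_1)=\mathrm{rk}(\hh_2)$, we may fix a maximal torus $\tt$ that is a Cartan subalgebra of both $\hh_1$ and $\hh_2$. All arguments take place in the complexifications $\hh_1^\CM\subset\hh_2^\CM$, on which $\tt^\CM$ acts by the adjoint representation.

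First I would argue that $\mathcal{R}(\hh_1)\subset\mathcal{R}(\hh_2)$ and that root spaces coincide under this inclusion. Indeed, for any $\gamma\in\mathcal{R}(\hh_1)$ the weight space $(\hh_1^\CM)_\gamma$ is nonzero and contained in the $\gamma$-weight space $(\hh_2^\CM)_\gamma$ of $\tt^\CM$ acting on $\hh_2^\CM$. Since $\hh_2$ is of compact type, $(\hh_2^\CM)_\gamma$ is one-dimensional, hence
\[
(\hh_1^\CM)_\gamma=(\hh_2^\CM)_\gamma\qquad\text{for every }\gamma\in\mathcal{R}(\hh_1).
\]

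Now suppose $\alpha,\beta\in\mathcal{R}(\hh_1)$ with $\alpha+\beta\in\mathcal{R}(\hh_2)$. A standard fact about root space decompositions in semisimple (or compact-type reductive) Lie algebras states that whenever $\alpha,\beta,\alpha+\beta$ are all roots of $\hh_2$, one has $[(\hh_2^\CM)_\alpha,(\hh_2^\CM)_\beta]=(\hh_2^\CM)_{\alpha+\beta}\neq 0$. Using the identification of root spaces above, this bracket can be computed inside $\hh_1^\CM$:
\[
0\neq[(\hh_1^\CM)_\alpha,(\hh_1^\CM)_\beta]\subset\hh_1^\CM.
\]
Since the left-hand side is a weight vector of weight $\alpha+\beta$ for the action of $\tt^\CM$ on $\hh_1^\CM$, the weight space $(\hh_1^\CM)_{\alpha+\beta}$ is nontrivial, and therefore $\alpha+\beta\in\mathcal{R}(\hh_1)$.

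The only mildly delicate point in this plan is justifying that $(\hh_2^\CM)_\gamma$ is one-dimensional and that the bracket $[(\hh_2^\CM)_\alpha,(\hh_2^\CM)_\beta]$ is nonzero when $\alpha+\beta$ is a root; both are classical facts about compact-type Lie algebras. Given these, the proof is essentially immediate and the equal-rank hypothesis is used only to ensure the existence of the common Cartan subalgebra $\tt$.
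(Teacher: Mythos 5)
Your proof is correct and follows essentially the same route as the paper's: fix a common Cartan subalgebra (possible by the equal-rank hypothesis), note that the one-dimensionality of root spaces in $\hh_2$ forces $(\hh_1)_\gamma=(\hh_2)_\gamma$ for $\gamma\in\mathcal{R}(\hh_1)$, and invoke the standard fact that $[(\hh_2)_\alpha,(\hh_2)_\beta]=(\hh_2)_{\alpha+\beta}\neq\{0\}$ when $\alpha+\beta$ is a root, so the bracket lands nontrivially in $(\hh_1)_{\alpha+\beta}$. The paper justifies the nonvanishing of the bracket by citing Samelson's explicit structure constants $[X_\alpha,X_\beta]=\pm(q+1)X_{\alpha+\beta}$, which is the same classical fact you defer to.
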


\begin{proof}
We first recall a general result about roots.
Let $\hh$ be a Lie algebra of compact type and $\mathfrak{t}$
a fixed Cartan subalgebra in $\mathfrak{h}$.
For any $\alpha\in\mathfrak{t}^*$, let $(\mathfrak{h})_{\alpha}$ denote 
the intersection of the nilspaces
of the operators $\mathrm{ad}(A)-\alpha(A)$ acting on $\mathfrak{h}$,
with $A$ running over $\mathfrak{t}$.
By definition, $\alpha$ is a root of $\mathfrak{h}$ if and only if  $(\mathfrak{h})_{\alpha}\neq\{0\}$.
Moreover, the Jacobi identity shows that $[(\mathfrak{h})_{\alpha}, 
(\mathfrak{h})_{\beta}]\subseteq(\mathfrak{h})_{\alpha+\beta}$.
It is well known that in this case the space $(\mathfrak{h})_{\alpha}$ is $1$-dimensional.
Moreover, by \cite[Theorem~A, p.~48]{s}, there exist generators
$X_{\alpha}$ of $(\mathfrak{h})_{\alpha}$
such that for any $\alpha,\beta\in\mathcal{R}(\mathfrak{h})$
with $\alpha+\beta\in\mathcal{R}(\mathfrak{h})$,
the following relation holds:
$[X_\alpha,X_\beta]=\pm(q+1)X_{\alpha+\beta}$, where $q$ is the largest integer $k$
such that $\beta-k\alpha$ is a root.
In particular, if $\alpha+\beta\in\mathcal{R}(\mathfrak{h})$,
then $[(\mathfrak{h})_{\alpha}, (\mathfrak{h})_{\beta}]=(\mathfrak{h})_{\alpha+\beta}$.

Let now $\mathfrak{t}$ be a fixed Cartan subalgebra in both $\mathfrak{h}_1$
and $\mathfrak{h}_2$ (this is possible because $\rk(\mathfrak{h}_1)=\rk(\mathfrak{h}_2)$)
and let $\alpha,\beta\in\mathcal{R}(\mathfrak{h}_1)\subseteq\mathcal{R}(\hh_2)$,
such that $\alpha+\beta\in\mathcal{R}(\mathfrak{h}_2)$.
The above result applied to $\hh_2$ implies:
$\{0\}\neq(\hh_2)_{\alpha+\beta}=[(\hh_2)_\alpha, (\hh_2)_\beta]=
[(\hh_1)_\alpha, (\hh_1)_\beta]\subseteq (\hh_1)_{\alpha+\beta}$,
where we use that $(\hh_1)_\alpha=(\hh_2)_\alpha$
for any $\alpha\in \mathcal{R}(\mathfrak{h}_1)\subseteq\mathcal{R}(\hh_2)$.
Thus, $(\hh_1)_{\alpha+\beta}\neq\{0\}$,
\emph{i.e.} $\alpha+\beta\in\mathcal{R}(\mathfrak{h}_1)$.
\end{proof}

We will also need the following result, whose proof is straightforward.

\begin{Lemma}\label{mixedroots}
(i) Let $k\geq 2$ and let $\mathfrak{h}$ be a Lie algebra of compact type written as an orthogonal direct sum of $k$ Lie algebras:
$\mathfrak{h}=\overset{k}{\underset{i=1}{\oplus}}\mathfrak{h}_i$ with respect to some $\ad_\hh$-invariant scalar product $\<\cdot,\cdot>$ on $\hh$.
Then, identifying each Lie algebra $\hh_i$ with its dual using $\<\cdot,\cdot>$ we have
$\mathcal{R}(\mathfrak{h})=\overset{k}{\underset{i=1}{\bigcup}}\mathcal{R}(\mathfrak{h}_i)$.
In particular, every root of $\hh$ lies in one component $\hh_i$. 

(ii) Let $\alpha$ and $\beta$ be two roots of $\hh$. If there exists a sequence of roots 
$\alpha_0:=\alpha,\alpha_1,\ldots,\alpha_n:=\beta$ ($n\ge 1$) such that $\<\alpha_i,\alpha_{i+1}>\ne 0$ for $0\le i\le n-1$, then
$\alpha$ and $\beta$ belong to the same component $\hh_i$.
\end{Lemma}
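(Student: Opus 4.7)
The plan is to exploit the fact that an orthogonal direct sum of Lie algebras is in particular a direct sum of ideals, so the root space decomposition is compatible with the splitting.

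First, I would fix a Cartan subalgebra $\tt$ of $\hh$ adapted to the decomposition, namely $\tt=\oplus_{i=1}^k\tt_i$ with $\tt_i:=\tt\cap\hh_i$ a Cartan subalgebra of $\hh_i$. This is possible since each $\hh_i$ is itself of compact type, and a Cartan of $\hh$ is obtained by summing Cartans of the ideals. Since the decomposition $\hh=\oplus_i\hh_i$ is an orthogonal direct sum of Lie subalgebras with respect to an $\ad_\hh$-invariant inner product, one has $[\hh_i,\hh_j]=0$ for $i\ne j$ (this can either be built into the meaning of ``orthogonal direct sum of Lie algebras'' or verified directly using invariance of the inner product).

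For part (i), I would observe that each $\hh_i$ is $\ad_\tt$-stable and that, for every $\alpha\in\tt^*$, the root space $(\hh)_\alpha$ decomposes as $(\hh)_\alpha=\oplus_{i=1}^k\bigl((\hh)_\alpha\cap\hh_i\bigr)$. If $0\ne X\in(\hh)_\alpha\cap\hh_i$ and $H\in\tt_j$ with $j\ne i$, then $[H,X]=0$ because $[\hh_j,\hh_i]=0$, so $\alpha(H)=0$. Thus every root $\alpha$ of $\hh$ vanishes on all $\tt_j$ with $j\ne i$ for some (unique) index $i$, which means $\alpha\in\tt_i^*\subset\tt^*$ and $\alpha\in\mathcal{R}(\hh_i)$. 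The converse inclusion is obvious: any root of $\hh_i$ extended by zero is a root of $\hh$. This gives $\mathcal{R}(\hh)=\bigcup_i\mathcal{R}(\hh_i)$, and the union is disjoint (indeed orthogonal) once we identify $\hh_i\cong\hh_i^*$ via the restricted scalar product, because the subspaces $\tt_i^*\subset\tt^*$ are mutually orthogonal.

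Part (ii) is then immediate: two roots belonging to different components $\hh_i$ and $\hh_j$ live in the orthogonal subspaces $\tt_i^*$ and $\tt_j^*$, hence are orthogonal. Therefore in any chain $\alpha_0,\alpha_1,\ldots,\alpha_n$ with $\<\alpha_\ell,\alpha_{\ell+1}>\ne 0$, each consecutive pair lies in the same component; by transitivity, so do $\alpha=\alpha_0$ and $\beta=\alpha_n$. There is no real obstacle here beyond checking that the orthogonal decomposition is stable under $\ad_\tt$ and that the metric identification $\tt\cong\tt^*$ transports the orthogonal splitting of $\tt$ into that of $\tt^*$, which is automatic.
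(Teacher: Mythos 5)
Your argument is correct and is precisely the standard one the authors have in mind when they declare the proof of this lemma ``straightforward'' and omit it: since the $\hh_i$ are ideals that commute pairwise, each is $\ad_\tt$-stable for an adapted Cartan subalgebra $\tt=\oplus_i\tt_i$, so every root space sits inside a single $\hh_i$ and the corresponding root vanishes on $\tt_j$ for $j\ne i$; orthogonality of the splitting then makes the $\tt_i^*$ mutually orthogonal under the metric identification, which gives both the disjointness in (i) and the chain argument in (ii). There is nothing in the paper to compare against, and no gap in what you wrote.
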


\section{The isotropy representation of homogeneous manifolds with Clifford structure}

Let $M=G/H$ be a compact homogeneous space. Denote by $\hh$ and $\gg$ the Lie 
algebras of $H$ and $G$ and by $\m$ the orthogonal complement of $\hh$ in $\gg$ 
with respect to some $\ad_\gg$-invariant scalar product on $\gg$. The restriction 
to $\mm$ of this scalar product defines a homogeneous Riemannian metric $g$ on $M$.
Since from now on we will exclusively consider even Clifford structures, and in 
order to simplify the terminology, we will no longer use the word ``even" and 
make the following:

\begin{Definition}
A {\em homogeneous Clifford structure}  of rank $\rr\ge 2$ on a Riemannian homogeneous
space $(G/H,g)$ is an orthogonal representation $\rho:H\to\SO(r)$ and an
$H$-equivariant representation $\varphi: \so(r)\to
\mathrm{End}^{-}(\mathfrak{m})$ extending to an algebra representation
of the even real Clifford algebra $\Cl^0_r$ on $\mathfrak{m}$.
\end{Definition}

Any homogeneous Clifford structure defines in a tautological way an 
{\em even Clifford structure} on $(M,g)$ in the sense of Definition 
\ref{d1}, by taking $E$ to be the vector bundle associated to the 
$H$-principal bundle $G$ over $M$ via the representation $\rho$:
$$E=G\times_\rho \R^r.$$ 
In order to describe the isotropy representation of a homogeneous 
Clifford structure we need to recall some facts about Clifford algebras, 
for which we refer to \cite{lm}.

The even real Clifford algebra $\Cl^0_r$ is isomorphic to a matrix
algebra $\K(n_r)$ for $r\not\equiv 0 \mod 4$ and to a direct sum
$\K(n_r)\oplus\K(n_r)$ when $r$ is multiple of $4$. The field $\K\ (=\R,\
\C\ \hbox{or}\ \H)$ and the dimension $n_r$ depend on $r$ according to
a certain 8-periodicity rule. More precisely, $\K=\R$ for $r\equiv 0,\ 1,\ 7
\mod\ 8$,  $\K=\C$ for $r\equiv 2,\ 6 \mod\ 8$ and  $\K=\H$ for
$r\equiv 3,\ 4,\ 5
\mod\ 8$, and if we write $r=8k+q$, $1\le q\le 8$, then $n_r=2^{4k}$ for $1\le q
\le 4$, $n_r=2^{4k+1}$ for $q=5$, $n_r=2^{4k+2}$ for $q=6$ and
$n_r=2^{4k+3}$ for $q=7$ or $q=8$.

Let $\S_r$ and $\S_r^\pm$ denote the irreducible
representations of $\Cl_r^0$ for $r\not\equiv 0\mod 4$ and $r\equiv
0\mod
4$ respectively. From the above, it is clear that $\S_r$ (or $\S_r^\pm$) have
dimension $n_r$ over $\K$.

\begin{Lemma}\label{isorep} 
Assume that $M=G/H$ carries a rank $r$ homogeneous Clifford structure
and let $\iota:H\to \mathrm{Aut}(\mathfrak{m})$ denote the isotropy
representation of $H$.

(i) If $r$ is not a multiple of $4$, we denote by $\xi$ the spin
representation of $\so(r)=\spin(r)$ on the spin module $\Sigma_r$ and by
$\mu=\xi\circ\rho_*$ its composition with $\rho_*$. Then the
infinitesimal isotropy representation $\iota_*$ on
$\mathfrak{m}$ is isomorphic to  
$\mu\otimes_\K \l$ for some representation $\l$ of $\h$ over $\K$.

(ii) If $r$ is multiple of $4$, we denote by $\xi^\pm$ the half-spin
representations of $\so(r)=\spin(r)$ on the half-spin modules
$\Sigma_r^\pm$ and by $\mu_\pm=\xi^\pm\circ\rho_*$ their compositions
with $\rho_*$. Then the infinitesimal isotropy
representation $\iota_*$ on $\mathfrak{m}$ is isomorphic to
$\mu_+\otimes_\K \l_+\oplus\mu_-\otimes_\K \l_-$ for
some representations $\l_\pm$ of $\h$ over $\K$.
\end{Lemma}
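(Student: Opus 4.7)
The idea is to apply the classical Wedderburn structure theory of Clifford modules to $\mathfrak{m}$, viewed as a $\Cl^0_r$-module via $\varphi$, and then to exploit the $H$-equivariance of $\varphi$ in order to decompose $\iota_*$.

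First I would recall that $\Cl^0_r$ is isomorphic (over $\K$) to $\End_\K(\Sigma_r)$ when $r\not\equiv 0\pmod 4$, and to $\End_\K(\Sigma_r^+)\oplus\End_\K(\Sigma_r^-)$ when $r\equiv 0\pmod 4$. In the first case $\Sigma_r$ is the unique simple $\Cl^0_r$-module, so Wedderburn theory supplies a (non-canonical) $\Cl^0_r$-linear isomorphism $\mathfrak{m}\cong\Sigma_r\otimes_\K V$ with $V:=\Hom_{\Cl^0_r}(\Sigma_r,\mathfrak{m})$, in which $\varphi(\omega)$ corresponds to $\xi(\omega)\otimes\id$. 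In the second case the two orthogonal central idempotents $\epsilon_\pm\in\Cl^0_r$ give a canonical splitting $\mathfrak{m}=\mathfrak{m}^+\oplus\mathfrak{m}^-$ with $\mathfrak{m}^\pm:=\varphi(\epsilon_\pm)\mathfrak{m}\cong\Sigma_r^\pm\otimes_\K V_\pm$, on which $\varphi$ acts by $\xi^\pm\otimes\id$.

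Next I would differentiate the $H$-equivariance condition on $\varphi$ and extend it from $\so(r)$ to all of $\Cl^0_r$ using that $\varphi$ is an algebra morphism; this yields
\begin{equation*}
[\iota_*(X),\varphi(\omega)]=\varphi([\rho_*(X),\omega])\qquad\text{for all }X\in\h,\ \omega\in\Cl^0_r.
\end{equation*}
Applied to $\omega=\epsilon_\pm$ (central, so $[\rho_*(X),\epsilon_\pm]=0$) this shows that $\iota_*(X)$ preserves the splitting $\mathfrak{m}^+\oplus\mathfrak{m}^-$ in case (ii), reducing that case to two separate instances of case (i). Applied to $\omega\in\so(r)$, and rewritten under the chosen iso $\mathfrak{m}\cong\Sigma_r\otimes_\K V$, it becomes
\begin{equation*}
[\iota_*(X)-\mu(X)\otimes\id,\ \xi(\omega)\otimes\id]=0.
\end{equation*}
Because $\xi(\so(r))$ generates $\xi(\Cl^0_r)=\End_\K(\Sigma_r)$ as an associative algebra, the element $\iota_*(X)-\mu(X)\otimes\id$ lies in the commutant of $\End_\K(\Sigma_r)\otimes\id$ inside $\End_\R(\Sigma_r\otimes_\K V)$. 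By the double-commutant theorem this commutant equals $\id\otimes\End_\K(V)$, which gives a unique $\lambda(X)\in\End_\K(V)$ with $\iota_*(X)=\mu(X)\otimes\id+\id\otimes\lambda(X)$; the assignment $X\mapsto\lambda(X)$ is then automatically a Lie-algebra representation of $\h$ on $V$, and this is precisely the decomposition $\iota_*\cong\mu\otimes_\K\lambda$ claimed in (i) (and analogously $\mu_+\otimes_\K\lambda_+\oplus\mu_-\otimes_\K\lambda_-$ in (ii)).

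The delicate point I expect is the commutant computation when $\K=\C$ or $\H$: a priori $\End_\R(\Sigma_r\otimes_\K V)$ is strictly larger than $\End_\K(\Sigma_r)\otimes_\K\End_\K(V)$, but since the commutation relation must hold against the full $\K$-subalgebra $\End_\K(\Sigma_r)\otimes\id$, which already contains the scalars $\K\cdot\id$, the commutant does collapse to $\id\otimes\End_\K(V)$ and the coefficient $\lambda(X)$ is forced to be $\K$-linear. This is what ensures that the tensor product in the conclusion is really over $\K$ and not merely over $\R$.
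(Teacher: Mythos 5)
Your proposal is correct and follows essentially the same route as the paper: both decompose $\mathfrak{m}$ as $\Sigma_r\otimes_\K\K^p$ (resp.\ $\Sigma_r^+\otimes_\K\K^p\oplus\Sigma_r^-\otimes_\K\K^{p'}$) using the matrix-algebra structure of $\Cl^0_r$, differentiate the $H$-equivariance of $\varphi$, and identify $\iota_*-\mu$ as lying in the commutant of the Clifford action, which is $\End_\K(\K^p)$. The paper computes this commutant explicitly via the matrix entries $\lambda_{ij}(X)\in\K$ and remarks that cross-terms $\Sigma_r^\pm\to\Sigma_r^\mp$ vanish, while you invoke the double-commutant theorem and the central idempotents $\epsilon_\pm$ — a purely presentational difference.
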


\begin{proof}
(i) Consider first the case when $r$ is not a multiple of $4$.
By definition, the $H$-equivariant representation $\varphi: \so(r)\to
\mathrm{End}^{-}(\mathfrak{m})$ extends to an algebra representation
of the even Clifford algebra $\Cl^0_r\simeq \K(n_r)$ on
$\mathfrak{m}$. Since every algebra representation of the matrix
algebra $\K(n)$ decomposes in a direct sum of irreducible
representations, each of them isomorphic to the standard
representation on $\K^n$, we deduce that $\varphi$ is a direct sum of
several copies of $\Sigma_r$. In other words, $\mathfrak{m}$ is
isomorphic to $\Sigma_r\otimes_\K \K^p$ for some $p$, and $\varphi$ is
given by $\varphi(A)(\psi\otimes v)=(\xi(A)\psi)\otimes v.$ We now
study the isotropy representation $\iota_*$ on
$\mathfrak{m}=\Sigma_r\otimes_\K \K^p$. Note that when $\K=\H$ is
non-Abelian, some care is required in order to define the tensor
product of representations over $\K$.

The $H$-equivariance of $\varphi$ is equivalent to:
\[\iota(h)\circ\varphi(A)\circ\iota(h)^{-1}=\varphi(\rho(h)A),\qquad
\forall A\in\so(r), \forall h\in H.\]
Differentiating this relation at $h=1$ yields
\[\iota_*(X)\circ\varphi(A)-\varphi(A)\circ\iota_*(X)=
\varphi(\rho_*(X)A),\qquad
\forall A\in\so(r), \forall X\in \h.\]
On the other hand, 
\[\varphi(\rho_*(X)A)=\varphi([\rho_*(X),A])=
[\varphi(\rho_*(X)),\varphi(A)]=[\mu(X),\varphi(A)],\]
so
\beq\label{com}[\iota_*(X)-\mu(X),\varphi(A)]=0,\qquad \forall
A\in\so(r), \forall
X\in \h.\eeq
We denote by $\lambda:=\iota_*-\mu$. If $\{v_i\}$ denotes the standard
basis of $\K^p$ we introduce the maps
$\lambda_{ij}:\h\to\End_\K(\Sigma_r)$ by
\[\lambda(X)(\psi\otimes v_i)=\sum_{j=1}^p\lambda_{ji}(X)
(\psi)\otimes v_j.  \]
The previous relation shows that $\lambda_{ij}(X)$ commutes with the  
Clifford action $\xi(A)$ on $\Sigma_r$ for every $A\in\so(r)$, so it
belongs to $\K$. The matrix with entries $\lambda_{ij}(X)$
thus defines a Lie algebra representation $\lambda:\h\to \End_\K(\K^p)$
such that 
\[ \iota_*(X)(\psi\otimes v)=\mu(X)(\psi)\otimes v + \psi\otimes
\lambda(X)(v),\qquad \forall X\in h, \forall \psi\in\Sigma_r, \forall
v\in \K^p.\]
This proves the lemma in this case. 

(ii) If $r$ is multiple of $4$,
the even Clifford algebra $\Cl_r^0$ has two inequivalent algebra
representations $\Sigma_r^\pm$. One can write like before
$\mathfrak{m}=\Sigma^+\otimes_\K \K^p\oplus\Sigma^-\otimes_\K \K^{p'}$
for some $p,p'\ge 0$, and $\varphi$ is given by
$\varphi(A)(\psi^+\otimes v+\psi^-\otimes v')=(\xi^+(A)\psi^+)\otimes
v+(\xi^-(A)\psi^-)\otimes v'.$ The rest of the proof is similar, using
the fact that every endomorphism from $\Sigma_r^\pm$ to $\Sigma_r^\mp$
commuting with the Clifford action of $\so(r)$ vanishes. 
\end{proof}

Let us introduce the ideals $\hh_1:=\ker(\rho_*)$ and $\hh_2:=\ker(\lambda)$ of $\hh$.
Since the isotropy representation is faithful, $\hh_1\cap\hh_2=0$
and it is easy to see that $\hh_1$ is orthogonal to $\hh_2$ with respect to
the restriction to $\hh$ of any $\ad_\gg$-invariant scalar product. Denoting by $\h_0$ the
orthogonal complement of $\hh_1\oplus\hh_2$ in $\hh$ we obtain the following
orthogonal decomposition:
\begin{equation}\label{hspl}
\mathfrak{h}=\mathfrak{h}_0\oplus\mathfrak{h}_1\oplus\mathfrak{h}_2
\end{equation}
and the corresponding splitting of the Cartan subalgebra of
$\mathfrak{h}$:
$\mathfrak{t}=\mathfrak{t}_0\oplus\mathfrak{t}_1\oplus\mathfrak{t}_2$.

Lemma~\ref{isorep} yields further a description of the weights of the isotropy representation
of homogeneous spaces with Clifford structure. We assume from now on 
that $\rk(G)=\rk(H)$ and choose a common Cartan subalgebra $\tt\subset\hh\subset\gg$. The system of roots of
$\mathfrak{g}$ is then the disjoint union of the system of roots of $\mathfrak{h}$ and the
weights of the complexified isotropy representation. Since each weight is simple 
(cf. \cite[p. 38]{s}) we deduce that all weights of $\mm\otimes_\RM\C$ are simple.

If $r$ is not multiple of 4, Lemma~\ref{isorep} (i) shows that the isotropy 
representation $\m$ is isomorphic to $\mu\otimes_\K\lambda$ for some representations $\mu$ and $\lambda$ of
$\h$ over $\K$. In order to express $\mm\otimes_\RM\C$ it will be convenient to use the following convention: If $\nu$
is a representation over $\K$, we denote by $\nu^\CM$ the representation over $\CM$ given by
$$\begin{cases}
\nu^\CM=\nu\otimes_\RM\CM,\qquad& \hbox{if}\ \K=\R\\
\nu^\CM=\nu,& \hbox{if}\ \K=\C\\
\nu^\CM=\nu,& \hbox{if}\ \K=\H\\
\end{cases}$$
where in the last row $\nu$ is viewed as complex representation by fixing one of the complex structures. Using the fact that
if $\mu$ and $\lambda$ are quaternionic representations,
then there is a  natural isomorphism between $(\mu\otimes_\H\lambda)^\C$ and
$\mu^\C\otimes_\C\lambda^\C$, one can then write
\beq\label{mm1}\begin{cases}\mm\otimes_\R \CM=\mu^\C\otimes_\C\lambda^\C,\qquad& \hbox{if}\ \K=\R\ \hbox{or}\ \H\\
\mm\otimes_\R \CM=\mu^\C\otimes_\C\lambda^\C\oplus \bar\mu^\C\otimes_\C\bar\lambda^\C,& \hbox{if}\ \K=\C
\end{cases}\eeq

If $r$ is multiple of 4, then $\mm=\mu_+\otimes_\K\lambda_+\oplus \mu_-\otimes_\K\lambda_-$ by 
Lemma~\ref{isorep} (ii), and the field $\K$ is either $\R$ or $\H$. Consequently,
\beq\label{mm2}\mm\otimes_\R \CM=\mu_+^\C\otimes_\C\lambda_+^\C\oplus \mu_-^\C\otimes_\C\lambda_-^\C.\eeq

Let us denote by $\A:=\{\alpha_1,\ldots,\alpha_p\}\subset\tt^*$ the weights of the representation $\l^\C$, defined when
$r$ is not a multiple of $4$.
For $r=2q+1$ $\l^\C$ is self-dual, so $\A=-\A$. Moreover, $\K=\H$ if $q\equiv 1$ or $2\mod 4$, so $p=\#\A$ is even,
whereas for $q\equiv 0$ or $3\mod 4$ $p$ might be odd, {\em i.e.} one of the vectors $\alpha_i$ may vanish.

For $r=2q$ with $q$ even, we denote by $\A:=\{\alpha_1,\ldots,\alpha_p\}$ and $\G:=\{\gamma_1,\ldots,\gamma_{p'}\}$
the weights of the representations $\l_\pm^\C$. Since they are both self-dual we have $\A=-\A$ and $\G=-\G$ and 
we note that $\K=\H$ for $q\equiv 2\mod4$, whence $p$ and $p'$ are even in this case.

Recall now that for $r=2q+1$, the weights of the complex spin representation $\S_r^\C$
are
$$\W(\S_r^\C)=\left\{\sum_{j=1}^q\e_j e_j,\ \e_j=\pm1\right\},$$
where $\{e_j\}$ is some orthonormal basis of the dual of some Cartan subalgebra of
$\so(2q+1)$. Similarly, if $r=2q$ with $q$ odd, the weights of the complex spin representation $\S_r^\C$ are
$$\W(\S_r^\C)=\left\{\sum_{j=1}^q\e_j e_j,\ \e_j=\pm1,\overset{q}{\underset{j=1}
{\prod}}\e_j=1\right\},$$
and for $r=2q$ with $q$ even,
the weights of the complex half-spin representations $(\S_r^\pm)^\C$ are
$$\W((\S_r^+)^\C)=\left\{\sum_{j=1}^q\e_j e_j,\ \e_j=\pm1,\overset{q}{\underset{j=1}
{\prod}}\e_j=1\right\},$$
$$\W((\S_r^-)^\C)=\left\{\sum_{j=1}^q\e_j e_j,\ \e_j=\pm1,\overset{q}{\underset{j=1}
{\prod}}\e_j=-1\right\}.$$

We denote by $\beta_j\in\tt^*$ the pull-back through $\mu_*$ of the vectors
$\frac{1}{2}e_j$, for $j=1,\dots,q$. Since $\mu=\xi\circ\rho_*$ (and $\mu_\pm=\xi^\pm\circ\rho_*$
for $r$ multiple of $4$), the above relations give directly the weights of $\mu^\C$ or
$\mu_\pm^\C$ as linear combinations of the vectors $\beta_j$.
Taking into account Lemma \ref{adm}, Lemma \ref{isorep}, \eqref{mm1}-\eqref{mm2} and 
the previous discussion, we obtain the following description of the weights of the isotropy representation
of a homogeneous Clifford structure:

\begin{Proposition}\label{wisotrop}
If there exists a homogeneous Clifford structure of rank $r$ on
a compact homogeneous space $G/H$ with $\rk(G)=\rk(H)$,
then the set $\mathcal{W}:=\mathcal{W}(\mathfrak{m})$ of weights of the isotropy representation
is an admissible subsystem of roots of $\mathcal{R}(\mathfrak{g})$
and is of one of the following types:
\begin{enumerate}
\item[(I)] If $r=2q+1$, then there exists $\A:=\{\alpha_1,\ldots,\alpha_p\}\subset\tt^*$ with $\A=-\A$ such that
  $\mathcal{W}=\A+\{\overset{q}{\underset{j=1}
{\sum}}\varepsilon_j\beta_j\}_{\varepsilon\in\mathcal{E}_q}$ and $\#\mathcal{W}=p\cdot 2^{q}$. 
Moreover, if $q\equiv 1$ or $2\mod 4$ then $p$ is even, so $\alpha_i\ne 0$ for all $i$.
\item[(II)] If $r=2q$ with $q$ odd, then
  $\mathcal{W}=\{(\overset{q}{\underset{j=1}
{\prod}}\varepsilon_j)\alpha_i+\overset{q}
{\underset{j=1}{\sum}}\varepsilon_j\beta_j \}_{ i=\overline{1,p},
  \varepsilon\in\mathcal{E}_q}$ and $\#\mathcal{W}=p\cdot 2^{q}$.
\item[(III)] If $r=2q$ with $q\equiv 2\mod 4$, then there exist $\A:=\{\alpha_1,\ldots,\alpha_p\}$ and $\G:=\{\gamma_1,\ldots,\gamma_{p'}\}$
in $\tt^*$ with $\A=-\A$ and $\G=-\G$ such that
  $$\mathcal{W}=\A+\left\{\overset{q}{\underset{j=1}{\sum}}
\varepsilon_j\beta_j |\, \overset{q}{\underset{j=1}{\prod}}
\varepsilon_j=1\right\}_{\varepsilon\in\mathcal{E}_q}
\bigcup\G+\left\{\overset{q}{\underset{j=1}{\sum}}
\varepsilon_j\beta_j |\, \overset{q}{\underset{j=1}
{\prod}}\varepsilon_j=-1\right\}_{\varepsilon\in\mathcal{E}_q}$$
and $\#\mathcal{W}=(p+p')\cdot 2^{q-1}$. In this case one of $p$ or $p'$ might vanish, but $p$ and $p'$ are even, so
the vectors $\alpha_i$ and $\gamma_i$ are all non-zero.
\item[(IV)] If $r=2q$ with $q\equiv 0\mod 4$ (in this case the
  semi-spinorial representation is real), then there exist $\A:=\{\alpha_1,\ldots,\alpha_p\}$ and $\G:=\{\gamma_1,\ldots,\gamma_{p'}\}$
in $\tt^*$ with $\A=-\A$ and $\G=-\G$ such that
  $$\mathcal{W}=\A+\left\{\overset{q}{\underset{j=1}{\sum}}
\varepsilon_j\beta_j |\, \overset{q}{\underset{j=1}{\prod}}
\varepsilon_j=1\right\}_{\varepsilon\in\mathcal{E}_q}
\bigcup\G+\left\{\overset{q}{\underset{j=1}{\sum}}\varepsilon_j\beta_j
|\, \overset{q}{\underset{j=1}{\prod}}\varepsilon_j=-1\right\}_{\varepsilon\in\mathcal{E}_q}$$
and  $\#\mathcal{W}=(p+p')\cdot 2^{q-1}$. In this case one of $p$ or $p'$ might vanish, as well as one of the vectors
$\alpha_i$ or $\gamma_i$.
\end{enumerate}
\end{Proposition}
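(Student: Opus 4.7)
The proposal is to assemble, case by case, the pieces just set up. Admissibility of $\mathcal{W}$ is already Lemma \ref{adm}, so I only need to justify the stated form of $\mathcal{W}$ and the cardinality in each of the four cases. My plan is to start from Lemma \ref{isorep} (which writes $\mm$ as a tensor product of a spinorial factor $\mu$ or $\mu_\pm$ with a multiplicity factor $\lambda$ or $\lambda_\pm$ over $\K$), pass to complexifications via \eqref{mm1}--\eqref{mm2}, and then identify the weights of each factor: by definition of $\beta_j$ as the $\mu_*$-pull-back of $\tfrac{1}{2}e_j$, the weights of $\mu^\C$ (or $\mu_\pm^\C$) are sums $\sum_j\e_j\beta_j$, possibly subject to $\prod_j\e_j=\pm 1$ when half-spin modules are involved; tensoring by $\lambda^\C$ (or $\lambda_\pm^\C$), whose weights are $\A$ (or $\G$), then adds these vectors. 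Simplicity of weights in $\mathcal{R}(\gg)$, recorded just before the proposition, rules out collisions and thus yields the claimed values of $\#\mathcal{W}$.

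In case (I), $r=2q+1$ is odd, so $\K\in\{\R,\H\}$ and the first line of \eqref{mm1} gives $\mm\otimes\C=\mu^\C\otimes\lambda^\C$ with weights $\A+\{\sum_j\e_j\beta_j\}_{\e\in\mathcal{E}_q}$. The self-duality of $\lambda^\C$ (orthogonal for $\K=\R$, symplectic for $\K=\H$) forces $\A=-\A$. When $q\equiv 1,2\mod 4$ one is in the quaternionic case, where the symplectic pairing pairs weights as $\{\alpha,-\alpha\}$ with equal multiplicity; simplicity of the overall weights then prevents $0\in\A$ and makes $p$ even. In case (II), $r=2q$ with $q$ odd, one has $\K=\C$ and the second line of \eqref{mm1} splits $\mm\otimes\C$ as $\mu^\C\otimes\lambda^\C\oplus\bar\mu^\C\otimes\bar\lambda^\C$. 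I would choose $\mu^\C$ to be the complex half-spin module with weights $\{\sum_j\e_j\beta_j\mid\prod_j\e_j=1\}$; since $q$ is odd, complex conjugation flips the product sign, so $\bar\mu^\C$ has weights with $\prod_j\e_j=-1$, while $\bar\lambda^\C$ has weights $-\A$. The two summands then contribute $\alpha_i+\sum_j\e_j\beta_j$ (with $\prod_j\e_j=1$) and $-\alpha_i+\sum_j\e_j\beta_j$ (with $\prod_j\e_j=-1$), condensing into $(\prod_j\e_j)\alpha_i+\sum_j\e_j\beta_j$.

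In cases (III) and (IV), $r=2q$ is a multiple of 4, so Lemma \ref{isorep}(ii) and \eqref{mm2} give $\mm\otimes\C=\mu_+^\C\otimes\lambda_+^\C\oplus\mu_-^\C\otimes\lambda_-^\C$. The weights of $\mu_\pm^\C$ are obtained from $\W((\Sigma_r^\pm)^\C)$ and equal $\{\sum_j\e_j\beta_j\mid\prod_j\e_j=\pm 1\}$; tensoring with $\lambda_\pm^\C$ (weights $\A$ and $\G$ respectively) produces exactly the two unions in the statement. In case (III), $q\equiv 2\mod 4$, $\K=\H$, and the quaternionic structure on $\lambda_\pm^\C$ forces $\A=-\A$, $\G=-\G$, $0\notin\A\cup\G$, and both $p,p'$ even. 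In case (IV), $q\equiv 0\mod 4$, $\K=\R$, and the orthogonal self-duality gives only $\A=-\A$, $\G=-\G$; in both cases one of $\lambda_\pm$ may be zero, producing $p=0$ or $p'=0$.

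The main technical subtlety I anticipate is in case (II): I will have to be careful about which irreducible complex representation of $\Cl_r^0\cong\C(n_r)$ is chosen as $\mu^\C$, and must verify that complex conjugation does exchange the two halves $\prod_j\e_j=\pm 1$ precisely because $q$ is odd. Beyond that, all four cases reduce to direct bookkeeping.
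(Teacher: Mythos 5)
Your proposal is correct and follows essentially the same route as the paper, which derives the Proposition precisely by combining Lemma \ref{adm}, Lemma \ref{isorep}, the complexification rules \eqref{mm1}--\eqref{mm2}, the weight sets of the (half-)spin modules, and simplicity of the weights of $\mm\otimes_\R\C$. The one subtlety you flag in case (II) --- that conjugation exchanges the two half-spin weight sets exactly when $q$ is odd --- is indeed the point the paper's condensed formula $(\prod_j\e_j)\alpha_i+\sum_j\e_j\beta_j$ encodes, and you handle it correctly.
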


In order to describe the homogeneous Clifford structures we shall now obtain
by purely algebraic arguments
several restrictions on the possible sets of weights of the isotropy representation given
by Proposition~\ref{wisotrop}.

\begin{Proposition}\label{linalg}
Let $\A:=\{\alpha_1,\ldots,\alpha_p\}$ and $\B:=\{\beta_1,\dots,\beta_q\}$ be subsets in a Euclidean space
$(V,\<\cdot,\cdot>)$ with $\beta_j\neq 0, j=\overline{1,q}$. The following restrictions
for $q$ hold:
\begin{enumerate}
\item[(I)] If $\A=-\A$ and
  $\mathcal{P}_1:=\A+\{\overset{q}
{\underset{j=1}{\sum}}\varepsilon_j\beta_j\}_{\varepsilon\in\mathcal{E}_q}$ is a subsystem of roots, then
$q\leq4$.  Moreover, if $q=4$, then $\alpha_i=0$ for all $1\leq i\leq p$.

\item[(II)] If $q$ is odd and
  $\mathcal{P}_2:=\{(\overset{q}{\underset{j=1}{\prod}}\varepsilon_j)
\alpha_i+\overset{q}{\underset{j=1}{\sum}}\varepsilon_j\beta_j \}_{
  i=\overline{1,p},  \varepsilon\in\mathcal{E}_q}$ is a subsystem of
roots, then $q\leq7$.  Moreover, if $q=5$ or $q=7$, then $\alpha_i\neq 0$ for all $1\leq i\leq p$.

\item[(III)-(IV)] If $q$ is even, $\A=-\A$ and
$$\mathcal{P}_3:=\A+\left\{\overset{q}{\underset{j=1}
{\sum}}\varepsilon_j\beta_j
|\, \overset{q}{\underset{j=1}{\prod}}\varepsilon_j=1\right\}_{\varepsilon\in\mathcal{E}_q}$$ or
$$\mathcal{P}_4:=\A+\left\{\overset{q}{\underset{j=1}{\sum}}\varepsilon_j\beta_j
|\, \overset{q}{\underset{j=1}{\prod}}\varepsilon_j=-1\right\}_{\varepsilon\in\mathcal{E}_q}$$ is a subsystem of
roots, then $q\leq8$. Moreover, if $q=8$, then $\alpha_i=0$ for all $1\leq i\leq p$. 
Thus, if there exists some $\alpha_i\neq 0$, it follows that $q\leq 6$.
\end{enumerate}
\end{Proposition}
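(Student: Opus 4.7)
The approach is to pit the combinatorial rigidity of $\mathcal{P}_i$ against the arithmetic rigidity of root systems expressed in \eqref{prodscal}--\eqref{newroots}. Every element of $\mathcal{P}_i$ has the form $c\alpha+\sum_j\e_j\beta_j$ with $c=\prod_j\e_j$ in case (II) and $c=1$ in the remaining cases (where $-\alpha\in\A$ is also available by $\A=-\A$). Consequently, the inner product of any pair of elements splits into an $\alpha$-$\alpha$, an $\alpha$-$\beta$ and a $\beta$-$\beta$ piece, and by choosing pairs whose sign patterns agree in most positions each piece can be isolated and fed into R3.

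\emph{Step 1: normalising the $\beta_j$-geometry.} I would fix $\alpha\in\A$ and consider pairs $v_\e,v_{\e'}\in\mathcal{P}_i$ whose sign patterns agree everywhere except in two positions $k,\ell$. The inner product $\langle v_\e,v_{\e'}\rangle$ then differs from $\|v_\e\|^2$ by a combination of $\langle\beta_k,\beta_\ell\rangle$ and, when $\alpha\ne 0$, of $\langle\alpha,\beta_k\rangle,\langle\alpha,\beta_\ell\rangle$. Replacing one of $v_\e,v_{\e'}$ by its analogue with $\alpha$ replaced by $-\alpha$ (using $\A=-\A$ in cases (I), (III)--(IV), or the sign flip of $\prod\e_j$ in case (II)) cancels the mixed $\alpha$-$\beta$ terms. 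Applying R3 across enough such pairs should force, after a rescaling on each irreducible component of $\mathrm{span}(\beta_j)$, the family $\{\beta_j\}$ to be orthogonal and of common length, and the values $\langle\alpha,\beta_j\rangle$ to be quantised.

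\emph{Step 2: Diophantine exclusion.} With the $\beta$-geometry normalised, the ratio $\frac{2\langle v_\e,v_{\e'}\rangle}{\|v_\e\|^2}$ becomes an explicit function of the Hamming distance $d$ between the two sign patterns and of $\|\alpha\|^2$; the restriction \eqref{normscal} that it take values in $\{0,\pm 1,\pm 2,\pm 3\}$ for every admissible $d$ imposes a Diophantine condition that fails for large $q$. In case (I) every $d\in\{0,\ldots,q\}$ is admissible and the condition breaks at $q=5$. In case (II) the coefficient $c=\prod\e_j$ of $\alpha$ flips with the parity of $d$, so comparing both parities relaxes the bound to $q\le 7$. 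In cases (III)--(IV) the constraint $\prod\e_j=\pm 1$ restricts $d$ to be even, yielding $q\le 8$. The ``moreover'' statements are then read off by checking when the Diophantine condition can be saturated at the boundary ranks $q=4,5,7,8$: this forces $\alpha=0$ in case (I) at $q=4$ and in cases (III)--(IV) at $q=8$, while $\alpha\ne 0$ becomes necessary in case (II) at $q=5$ and $q=7$.

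The main obstacle will be Step 1 when $\A$ contains several non-proportional vectors and $\alpha$ is not orthogonal to $\mathrm{span}(\beta_j)$, since pairing two elements built from different $\alpha_i\in\A$ couples the $\alpha$- and $\beta$-geometries and breaks the clean isolation of $\langle\beta_k,\beta_\ell\rangle$. I would circumvent this by using the reflection axiom R4 to propagate the integrality conditions and reduce to the single-$\alpha$ case, or alternatively by averaging the R3 constraints over $\e$ to wash out the odd-in-$\e$ cross-terms.
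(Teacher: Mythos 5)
Your proposal has a genuine gap, and it is located exactly where you yourself flag the ``main obstacle'' --- but the obstacle is more fundamental than you suggest. Step 1 asserts that the axioms R3--R4, applied to pairs of elements of $\mathcal{P}_i$ differing in few sign positions, force the $\beta_j$ to be orthogonal and of common length (up to rescaling irreducible components). This is false. The paper's own Lemma 3.5 classifies the possible Gram matrices $(\<\beta_i,\beta_j>)_{ij}$ for subsystems of the form $\{\sum_j\varepsilon_j\beta_j\}$ at $q=3$ and $q=4$: besides $\frac14\id_4$ one gets the matrices $M_0$, $M_1$, $M_2$, $M_3$, which have non-orthogonal $\beta_j$ and/or unequal norms, and $M_1$, $M_2$ are realized by actual root systems ($\so(6)$ and $\gg_2$). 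Note also that Proposition 3.3 assumes only that $\mathcal{P}_i$ is a subsystem of roots, not that it is admissible, so you cannot invoke Lemma 2.6 to prune these cases. Since Step 2's Diophantine computation (the ratio as a function of the Hamming distance $d$) is carried out under the orthonormality hypothesis, the entire exclusion argument rests on a premise that cannot be established; as a sanity check, your formula would give $\frac{2\<v_\e,v_{\e'}>}{\norm(v_\e)}=2(q-2d)/q$ and hence exclude $q=3$ in case (I), which is in fact realizable.

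The paper's proof needs no control of the individual $\<\beta_i,\beta_j>$. For (I), adjoin $\beta_0:=\alpha_i$ (if nonzero), choose signs so that $\beta:=\sum_{j=0}^q\beta_j$ has maximal norm, normalized to $1$; then \eqref{prodscal} gives $\<\beta,\beta-2\beta_j>\le\frac12$ for each $j$, while the sum over $j$ telescopes to $(q+1)-2\norm(\beta)=q-1$, whence $q-1\le\frac{q+1}{2}$, i.e.\ $q\le 3$; with all $\alpha_i=0$ the same count gives $q\le 4$. Cases (II) and (III)--(IV) are then pure reductions: absorbing $\alpha_1$ as a new coordinate $\beta_0$ turns a type (II) set into a type (III)--(IV) set with $q+1$ coordinates and no $\alpha$'s (giving $q+1\le 8$), and pairing $\beta'_j:=\beta_{2j-1}\pm\beta_{2j}$ turns a type (III)--(IV) set into a type (I) set with $q/2$ coordinates (giving $q/2\le 4$). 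The ``moreover'' clauses fall out of which branch of (I) applies. If you want to salvage your approach, replace Step 1 by this single maximal-norm element and the averaged (summed over $j$) form of R3, rather than trying to pin down the full Gram matrix.
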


\begin{proof}
(I) If there exists $i\in\{1,\dots,p\}$ such that $\alpha_i\neq 0$,
let $\beta_0:=\alpha_i$ and
$\beta:=\overset{q}{\underset{j=0}{\sum}}\beta_j$. By changing the
signs if necessary, we may assume without loss of generality that
$\beta$ has the largest norm among all elements of $\mathcal{P}_1$ and
$\norm(\beta)=1$. From \eqref{prodscal} it follows that 
\[\<\beta,\beta-2\beta_j>\in\left\{0,\pm \frac{1}{2}\right\}, j=0,\dots,q.\]
We then have $\frac{1}{2}(q+1)\geq
\underset{j=0}{\overset{q}{\sum}}\<\beta,\beta-2\beta_j>=q-1$, 
which shows that $q\leq 3$. If $\alpha_i=0$ for all
$i\in\{1,\dots,p\}$, it follows by the same argument that $q\leq 4$. 

(II) If there exists $i\in\{1,\dots,p\}$ such that $\alpha_i=0$, then
$\{\overset{q}{\underset{j=1}{\sum}}\varepsilon_j\beta_j\}
_{\varepsilon\in\mathcal{E}_q}\subset\mathcal{P}_2$
and it follows from (I) that $q\leq 4$. 
In particular, this shows that if $q\in\{5,7\}$, then $\alpha_i\neq0$,
for all $i=1,\dots,p$.

Otherwise, if  $\alpha_i\neq 0$ for all $i=1,\dots,p$, then by
denoting $\beta_0:=\alpha_1$, we have 
$\{(\overset{q}{\underset{j=1}{\prod}}\varepsilon_j)
\alpha_1+\overset{q}{\underset{j=1}{\sum}}\varepsilon_j\beta_j\}_
{\varepsilon\in\mathcal{E}_q}=\{\overset{q}
{\underset{j=0}{\sum}}\varepsilon_j\beta_j
|\, \overset{q}{\underset{j=0}{\prod}}\varepsilon_j=1\}_  
{\varepsilon\in\mathcal{E}_{q+1}}\subset\mathcal{P}_2$. This subset is
of the same type as those considered in (III)--(IV) with $q+1$ even and
with all $\alpha_i=0$. It then follows from (III)--(IV) that $q+1\leq 8$, so
$q\leq 7$.

(III)--(IV): If we denote by $\beta'_j:=\beta_{2j-1}+\beta_{2j}$, for $j=1,\dots,\frac{q}{2}$,
then
$\A+\{\overset{q/2}{\underset{j=1}{\sum}}
\varepsilon_j\beta'_j\}_{\varepsilon\in\mathcal{E}_{q/2}}\subset\mathcal{P}_3$ is a subsystem of roots. It then
follows from (I) that $q\leq 8$ and the equality is attained only if
$\alpha_i=0$, for all $i=1,\dots, p$. The same argument holds for
$\mathcal{P}_4$ if we choose $\beta'_1=\beta_1-\beta_2$ and
$\beta'_j:=\beta_{2j-1}+\beta_{2j}$, for $
j=2,\dots,\frac{q}{2}$.   
\end{proof}

We now give a more precise description of the subsystems of roots that may occur in the limiting cases of
Proposition~\ref{linalg}. Namely, we determine all the possible scalar products between the roots.

\begin{Lemma}\label{typeI}
$(a)$ Let
$\mathcal{P}:=\{\overset{q}{\underset{j=1}{\sum}}
\varepsilon_j\beta_j\}_{\varepsilon\in\mathcal{E}_q}$ be a subsystem of roots
   with $\#\mathcal{P}=2^{q}$.
\begin{enumerate}
\item[(i)] If $q=4$, then the Gram matrix of scalar products $(\<\beta_i,\beta_j>)_{ij}$ is 
(up to a permutation of the subscripts and sign changes)
one of the following:
\begin{equation}\label{m4}
\frac{1}{4}\id_4 \text{ or } M_0:=\left(\begin{array}{cccc}
\frac{1}{4} & 0 & 0 & 0 \\
0 & \frac{1}{8} & \frac{1}{16} & \frac{1}{16} \\
0 & \frac{1}{16} & \frac{1}{8} & \frac{1}{16} \\
0 & \frac{1}{16} & \frac{1}{16} & \frac{1}{8} \\
\end{array}\right).
\end{equation}
Moreover, if $\mathcal{P}$ is admissible, then only the first case can occur, 
the Gram matrix is $(\<\beta_i,\beta_j>)_{ij}=\frac{1}{4}\id_4$ and $\overline{\mathcal{P}}=\mathcal{R}(\so(8))$,
$\overline{\mathcal{P}}\setminus\mathcal{P}=\mathcal{R}(\so(4)\oplus\so(4))$.
\item[(ii)] If $q=3$, then the Gram matrix of scalar products $(\<\beta_i,\beta_j>)_{ij}$ is 
(up to a permutation of the subscripts and sign changes) one of the following:
\begin{equation}\label{m3}
M_1:=\left(\begin{array}{ccc}
\frac{1}{2} & 0 & 0\\
0 & \frac{1}{4} & 0 \\
0 & 0 & \frac{1}{4} \\
\end{array}\right)\text{ or } M_2:=\left(\begin{array}{ccc}
\frac{1}{3} & 0 & \frac{1}{6} \\
0 & \frac{1}{4} & 0 \\
\frac{1}{6} & 0 & \frac{1}{12} \\
\end{array}\right) \text{ or } M_3:=\left(\begin{array}{ccc}
\frac{3}{8} & \frac{1}{16} & \frac{1}{16} \\
\frac{1}{16} & \frac{1}{8} & \frac{1}{16} \\
\frac{1}{16} & \frac{1}{16} & \frac{1}{8} \\
\end{array}\right).
\end{equation}
Moreover, if $\mathcal{P}$ is admissible, then only the first two cases can occur. 
For the Gram matrix $(\<\beta_i,\beta_j>)_{ij}=M_1$ the subsystems of roots are $\overline{\mathcal{P}}=\mathcal{R}(\so(6))$ and
$\overline{\mathcal{P}}\setminus\mathcal{P}=\mathcal{R}(\so(4))$ and for $(\<\beta_i,\beta_j>)_{ij}=M_2$,
$\overline{\mathcal{P}}=\mathcal{R}(\mathfrak{g}_2)$ and
$\overline{\mathcal{P}}\setminus\mathcal{P}=\mathcal{R}(\so(4))$.
\end{enumerate}
$(b)$ Let
$\mathcal{P}:=\{\overset{q}{\underset{j=1}{\sum}}
\varepsilon_j\beta_j |\, \overset{q}{\underset{j=1}{\prod}}
\varepsilon_j=1\}_{\varepsilon\in\mathcal{E}_q}$ be an admissible subsystem of roots
  with $\#\mathcal{P}=2^{q}$. If $q=8$, then
  the Gram matrix $(\<\beta_i,\beta_j>)_{ij}$ is 
  (up to a permutation of the subscripts and sign changes)
  equal to $\frac{1}{8}\id_8$.
\end{Lemma}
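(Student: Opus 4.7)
The plan is to apply axiom R3 systematically to pairs of vectors in $\mathcal{P}$ in order to constrain the Gram matrix $G := (\langle \beta_i, \beta_j \rangle)_{ij}$, and then enumerate the finitely many solutions. I would first rescale so that the longest element of $\mathcal{P}$ has norm $1$, and then use sign-flips $\beta_j \mapsto -\beta_j$ (which permute $\mathcal{P}$) to arrange that $v := \sum_j \beta_j$ realizes this maximum, giving $|v|^2 = 1$. Each $v_\varepsilon = v - 2\sum_{j \in S(\varepsilon)} \beta_j$ then satisfies $2\langle v, v_\varepsilon \rangle \in \mathbb{Z}$, which translates into integrality conditions on partial row-sums of $G$; supplementing these with the pairwise R3-constraints between all other pairs $(v_\varepsilon, v_{\varepsilon'})$ should reduce the determination of $G$ to a finite, if tedious, enumeration.

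For part $(a)(i)$ with $q=4$, I expect the enumeration to collapse to the two Gram matrices $\tfrac{1}{4}\id_4$ and $M_0$ listed in \eqref{m4}. To rule out $M_0$ under the admissibility hypothesis, I would exhibit the pair $v = \beta_1 + \beta_2 + \beta_3 + \beta_4$ (of squared norm $1$) and $v' = v - 2\beta_4$ (of squared norm $\tfrac{1}{2}$), for which a direct computation in $M_0$ gives $\langle v, v' \rangle = |v|^2 - 2\langle v, \beta_4 \rangle = \tfrac{1}{2} \ne 0$, contradicting Lemma~\ref{admis}. Under the surviving Gram matrix $\tfrac{1}{4}\id_4$, setting $e_j := 2\beta_j$ makes $(e_j)$ orthonormal and identifies $\mathcal{P}$ with the $16$ half-integer vectors $\tfrac{1}{2}(\pm e_1 \pm e_2 \pm e_3 \pm e_4)$. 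Reflecting two such vectors whose inner product equals $\pm\tfrac{1}{2}$ yields the $8$ integer vectors $\pm e_j$, so $\overline{\mathcal{P}}$ is the $24$-vector ``$24$-cell'' presentation of $\mathcal{R}(\so(8)) = D_4$, while $\overline{\mathcal{P}} \setminus \mathcal{P} = \{\pm e_j\} \cong A_1^4 = \mathcal{R}(\so(4) \oplus \so(4))$.

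Parts $(a)(ii)$ and $(b)$ will follow the same template. For $q=3$, I expect the enumeration to produce the three matrices $M_1, M_2, M_3$ of \eqref{m3}; the matrix $M_3$ will be excluded by the same admissibility argument (a pair of non-orthogonal vectors of distinct norms in $\mathcal{P}$), while $M_1$ and $M_2$ will place $\mathcal{P}$ as the $8$ short weights inside $\mathcal{R}(\so(6))$ and $\mathcal{R}(\mathfrak{g}_2)$, respectively, via an explicit orthonormal change of basis; in both cases the complement $\overline{\mathcal{P}} \setminus \mathcal{P} \cong A_1 \oplus A_1 = \mathcal{R}(\so(4))$. For part $(b)$ with $q=8$ under the parity constraint, the same strategy — now drawing integrality from $2^{q-1}=128$ sign patterns — combined with admissibility will force $G = \tfrac{1}{8}\id_8$; the resulting configuration is the standard presentation of the half-spin weights of $\Spin(16)$ which appear inside $\mathcal{R}(\mathrm{E}_8)$.

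The main obstacle will be the enumeration step for $q=4$, where one must sift through the full integrality system to verify that only $\tfrac{1}{4}\id_4$ and $M_0$ can arise among the $10$ real parameters of $G$. A careful case split — organized, for example, by the multiset of integers $\{2\langle v, v_\varepsilon\rangle\}_\varepsilon$ and by the number of distinct squared norms among the $v_\varepsilon$ — will be needed to rigorously exclude all other possibilities. A secondary subtlety is the identification of $\overline{\mathcal{P}}$ with a named Lie algebra's root system, which relies on recognizing non-standard presentations of $D_4$ (the $24$-cell) and the analogous presentations of $D_3$, $\mathfrak{g}_2$, and $\mathrm{E}_8$.
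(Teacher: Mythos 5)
For parts $(a)(i)$ and $(a)(ii)$ your plan coincides with the paper's proof: normalize so that $\beta:=\sum_j\beta_j$ realizes the maximal norm $1$, extract the constraints on the Gram matrix from the admissible inner products and norm ratios of the roots $\beta-2\sum_{j\in S}\beta_j$, exclude $M_0$ (resp.\ $M_3$) by exhibiting the non-orthogonal pair $\beta$, $\beta-2\beta_j$ of distinct norms and invoking Lemma~\ref{admis}, and identify the surviving configurations with $\mathcal{R}(\so(8))$, $\mathcal{R}(\so(6))$ and $\mathcal{R}(\mathfrak{g}_2)$. One point you should make explicit: Lemma~\ref{admis} requires the $\beta_j$ to be linearly independent, which in the cases to be excluded follows from the invertibility of $M_0$ and $M_3$.

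The genuine gap is in part $(b)$. Because of the parity constraint $\prod_j\varepsilon_j=1$, no single sign flip stays inside $\mathcal{P}$, so the vectors $\beta-2\beta_j$ that drive your enumeration in $(a)$ are not available; ``the same strategy with $2^{q-1}$ sign patterns'' therefore does not get started, since all accessible comparisons only see the pairs $\beta_i+\beta_j$ and $\beta_i-\beta_j$. Moreover, the appeal to admissibility is problematic: a priori the eight vectors $\beta_j$ need not be linearly independent (compare the $M_2$ case in $(a)(ii)$, where $\beta_1=2\beta_3$), so Lemma~\ref{admis} cannot be applied directly. The paper instead reduces $(b)$ to $(a)(i)$: for each partition of $\{1,\ldots,8\}$ into four pairs, the corresponding sums/differences $\beta'_l=\pm\beta_j\pm\beta_k$ span a subsystem of the type treated in $(i)$, whose Gram matrix is therefore $\frac14\id_4$ or $M_0$; the $M_0$ alternative is excluded \emph{not} by admissibility (the sub-subsystem inherits no admissibility from $\mathcal{P}$) but by comparing the diagonal entries arising from several different partitions, which forces $\norm(\beta_j\pm\beta_k)=\frac18$ for $3\le j<k\le 8$ and hence $\<\beta_j,\beta_k>=0$, contradicting the off-diagonal entry $\frac1{16}$ of $M_0$. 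Running this over all partitions then yields $(\<\beta_i,\beta_j>)_{ij}=\frac18\id_8$. Without this pairing device, or an equivalent substitute, your plan for $(b)$ does not go through.
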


\begin{proof}
(i) As in the proof of Proposition~\ref{linalg},
we denote by $\beta:=\overset{q}{\underset{j=0}{\sum}}\beta_j$
and, up to sign changes, we may assume that
$\beta$ has the largest norm among all elements of $\mathcal{P}$ and
that this norm is equal to $1$.
We consider the Gram matrix of scalar products $(\<\beta_i,\beta_j>)_{ij}$.
Since $\<\beta_j,\beta>=\frac{1}{4}$ for all $j=\overline{1,4}$, 
the sum of the elements of each of its lines is $\frac{1}{4}$.

Since $\norm(\beta)=1$ is the largest norm of the roots in
$\mathcal{P}$, it follows that the square norms of the other roots may
take the following values: $\{1,\frac{1}{2}, \frac{1}{3}\}$, so
that
\begin{equation}\label{norms}
\norm(\beta-2\beta_i)=4\norm(\beta_i) \Rightarrow
\norm(\beta_i)\in\left\{\frac{1}{4}, \frac{1}{8},\frac{1}{12}\right\}, \text{ for all }
1\leq i\leq 4.
\end{equation} 

Let $i,j\in\{1,\dots,4\}$, $i\neq j$ and assume that $\norm(\beta_i)\geq
\norm(\beta_j)$.
As $\<\beta-2\beta_i,\beta-2\beta_j>=4\<\beta_i,\beta_j>$, it follows by \eqref{prodscal}
that $\<\beta_i,\beta_j>\in\{0,\pm\frac{1}{2}\norm(\beta_i)\}$. 

The case $\<\beta_i,\beta_j>=-\frac{1}{2}\norm(\beta_i)$ cannot
occur, because it leads to the following contradiction:
\[0<\norm(\beta-2\beta_i-2\beta_j)=4\norm(\beta_i+\beta_j)-1=
4\norm(\beta_j)-1\leq
0.\] 

Assume that there exists $i,j\in\{1,\dots,4\}$, $i\neq j$, such that
 $\<\beta_i,\beta_j>=\frac{1}{2}\norm(\beta_i)$. It then follows
$\norm(\beta_i+\beta_j)=2\norm(\beta_i)+\norm(\beta_j)\in
\{\frac{1}{2},\frac{3}{8},\frac{1}{3}\}$,
which combined with the restrictions \eqref{norms} yield the following
possible values: either $\norm(\beta_i)=\norm(\beta_j)=\frac{1}{8}$
or $\norm(\beta_i)=\frac{1}{8},
\norm(\beta_j)=\frac{1}{12}$.
In both cases $\<\beta_i,\beta_j>=\frac{1}{16}$.
Hence, all non-diagonal entries are either $0$ or $\frac{1}{16}$
and since the sum of the elements of each line is equal to $\frac{1}{4}$, the case
$\norm(\beta_j)=\frac{1}{12}$ is excluded.
It follows that each line of the Gram matrix $(\<\beta_i,\beta_j>)_{ij}$ either
has all non-diagonal entries equal to $0$ and the diagonal term is
$\frac{1}{4}$ or two of them are $\frac{1}{16}$, one is $0$ and the
diagonal term is $\frac{1}{8}$. In particular,
$\norm(\beta_i)\in\{\frac{1}{4},\frac{1}{8}\}$, for all $1\leq i\leq 4$.
Up to a permutation of the subscripts we may assume that
$\norm(\beta_i)\geq\norm(\beta_j)$ for all $1\leq i<j\leq 4$.
The following two cases may occur: either
$\norm(\beta_1)=\norm(\beta_2)=\frac{1}{4}$ or
$\norm(\beta_1)=\frac{1}{4}$ and
  $\norm(\beta_2)=\frac{1}{8}$.
  
In the first case at most one non-diagonal term may be $\frac{1}{16}$, 
showing that they must all be equal to $0$. This contradicts $\<\beta_i,\beta_j>=\frac{1}{2}\norm(\beta_i)\neq 0$.
In the second case, since we have one
  non-diagonal term equal to $\frac{1}{16}$, we must have another one
  and then $\norm(\beta_3)=\norm(\beta_4)=\frac{1}{8}$. Hence, the
  corresponding Gram matrix is equal to $M_0$ defined by \eqref{m4}.

The subsystem of roots corresponding to the matrix $M_0$ is not
admissible, since the necessary criterion given by
Lemma~\ref{admis} is not fulfilled: the set
$\{\beta_i\}_{i=\overline{1,4}}$ is linearly independent (since the
Gram matrix is invertible) and $\beta$, $\beta-2\beta_2$ are two roots
of the subsystem, which have different norms
$\norm(\beta-2\beta_2)=\frac{1}{2}=\frac{1}{2}\norm(\beta)$ and are
not orthogonal: $\<\beta,\beta-2\beta_2>=\frac{1}{2}$. 

The only case left is
when $\{\beta_i\}_{i=\overline{1,4}}$ is an orthonormal system. In
this case the new roots obtained by considering all possible
reflections are the roots $\{\pm2\beta_i\}_{i=\overline{1,4}}$,
which are of the same norm and orthogonal to each other and thus build 
the system of roots of $\su(2)\oplus \su(2)\oplus \su(2)\oplus\su(2)$.
Then the minimal set of roots is
$\overline{\mathcal{P}}=\{\pm\overset{4}{\underset{j=1}{\sum}}
\varepsilon_j\beta_j\}\cup\{\pm2\beta_i\}_{i=\overline{1,4}}$,
which is the system of roots of $\so(8)$, proving (i). 

(ii) If $q=3$, we assume as above that $\beta:=\beta_1+\beta_2+\beta_3$ is the
element of $\mathcal{P}$ of maximal norm and $\norm(\beta)=1$. From
\eqref{prodscal} it follows that $\< \beta,
\beta-2\beta_i>\in\{0,\pm\frac{1}{2}\}$, for all $i=\overline{1,3}$,
yielding that $\< \beta, \beta_i>\in\{\frac{1}{2}, \frac{1}{4},
\frac{3}{4}\}$. Since $1=\norm(\beta)=\sum_{i=1}^{3}\<\beta_i,\beta>$,
the only possible values (up to a permutation of the subscripts) are
$\<\beta_1,\beta>=\frac{1}{2}$ and
$\<\beta_2,\beta>=\<\beta_3,\beta>=\frac{1}{4}$.  
Then, from $\norm(\beta-2\beta_i) \in\{1, \frac{1}{2}, \frac{1}{3}\}$,
it follows that
\begin{equation}\label{norms1}
\norm(\beta_1)\in\left\{\frac{1}{2}, \frac{3}{8}, \frac{1}{3}\right\}, \quad
\norm(\beta_2), \norm(\beta_3)\in\left\{\frac{1}{4}, \frac{1}{8},
\frac{1}{12}\right\}.
\end{equation}
Since $\< \beta, \beta-2\beta_1>=0$ and $\< \beta, \beta-2\beta_2>=\<
\beta, \beta-2\beta_3>=\frac{1}{2}$, we obtain the following
expressions for the scalar products:
\begin{align}
2\< \beta_2, \beta_3>&=\norm(\beta_1)-\norm(\beta_2)-\norm(\beta_3),
\label{prodscals1} \\ 
2\< \beta_1,
\beta_2>&=\frac{1}{2}+\norm(\beta_3)-\norm(\beta_1)-\norm(\beta_2),
\label{prodscals2} \\
2\< \beta_1,
\beta_3>&=\frac{1}{2}+\norm(\beta_2)-\norm(\beta_1)-\norm(\beta_3).
\label{prodscals3} 
\end{align}

The other conditions for the scalar products of roots in $\mathcal{P}$
obtained from \eqref{prodscal} are:
$\<\beta-2\beta_i,
\beta-2\beta_j>\in\{0,\pm\frac{1}{2}\max(\norm(\beta-2\beta_i),
\norm(\beta-2\beta_j))\}$, for all $1\leq i <j\leq 3$, which imply 
\begin{align}
\<\beta_2,\beta_3>&\in\left\{0,\pm\frac{1}{2}\max(\norm(\beta_2),
\norm(\beta_3))\right\}, \label{cdtscal1} \\
\<\beta_1,\beta_i>&\in\left\{\frac{1}{8},
\frac{1}{8}\pm\frac{1}{2}\max(\norm(\beta_1)-\frac{1}{4},
\norm(\beta_i))\right\}, i=2,3. \label{cdtscal2} 
\end{align}

We may assume (up to a permutation) that $\norm(\beta_2)\geq
\norm(\beta_3)$. By substituting \eqref{prodscals1}--\eqref{prodscals3}
in \eqref{cdtscal1}--\eqref{cdtscal2} we obtain the following
conditions:
$\norm(\beta_1)-\norm(\beta_2)-\norm(\beta_3)\in\{0,\pm\norm(\beta_2)\}$
and
$\norm(\beta_2)-\norm(\beta_1)-\norm(\beta_3)+\frac{1}{4}\in
\{0,\pm\norm(\beta_3)\}$,
which together with the restrictions  \eqref{norms1} for the norms
yield the following possible values:  
\[(\norm(\beta_1), \norm(\beta_2),
\norm(\beta_3))\in\left\{\left(\frac{1}{2},\frac{1}{4},
\frac{1}{4}\right),\left(\frac{1}{3},\frac{1}{4},
\frac{1}{12}\right),\left(\frac{3}{8},\frac{1}{8},
\frac{1}{8}\right)\right\}.\]  

We thus obtain that the Gram matrix $(\<\beta_i,\beta_j>)_{ij}$
must be equal to one of the three matrices $M_i$, $i=\overline{1,3}$, defined by \eqref{m3}.

By Lemma~\ref{admis}, the subsystem of roots corresponding to $M_3$ is not
admissible, since the set
$\{\beta_i\}_{i=\overline{1,3}}$ is linearly independent (its Gram
matrix is invertible) and there exist two roots $\beta$,
$\beta-2\beta_2$ of different norms
$\norm(\beta-2\beta_2)=\frac{1}{2}=\frac{1}{2}\norm(\beta)$, which are
not orthogonal: $\<\beta,\beta-2\beta_2>=\frac{1}{2}$. 

The subsystem of roots corresponding to the matrix $M_1$ is admissible and in
this case the minimal set of roots containing $\mathcal{P}$ is
obtained by adjoining the roots $\pm2\beta_2$ and $\pm2\beta_3$, which
also have norm equal to $1$, showing that
$\overline{\mathcal{P}}=\mathcal{R}(\so(6))$ and
$\overline{\mathcal{P}}\setminus \mathcal{P}=\mathcal{R}(\so(4))$.

For the Gram matrix $M_2$, it follows that $\beta_1=2\beta_3$ and
$\{\beta_1,\beta_2\}$ are linearly independent. The roots in
$\mathcal{P}$ have the following norms:
$\norm(\beta)=\norm(\beta-2\beta_2)=1$ and
$\norm(\beta-2\beta_1)=\norm(\beta-\beta_1)=\frac{1}{3}$ and the roots
added by all possible reflections in order to obtain the minimal system of roots
$\overline{\mathcal{P}}$ are $\{\pm\beta_1, \pm2\beta_2\}$ with
$\norm(\beta_1)=\frac{1}{3}$, $\norm(2\beta_2)=1$ and $\<\beta_1,2\beta_2>=0$.
It thus follows that
$\overline{\mathcal{P}}\setminus\mathcal{P}=\mathcal{R}(\su(2)\oplus
\su(2))$ and $\overline{\mathcal{P}}=\mathcal{R}(\mathfrak{g}_2)$ (cf. \cite[p. 32]{a}). 

(b) If we denote by
$\beta'_j:=\beta_{2j-1}+\beta_{2j}$, for $j=1,\dots,4$, then
$\{\overset{4}{\underset{j=1}{\sum}}\varepsilon_j\beta'_j\}_{
\varepsilon\in\mathcal{E}_{4}}\subset\mathcal{P}$
is a subsystem of roots. From (i) it follows that the Gram
matrix  $(\<\beta'_i, \beta'_j>)_{i,j}$ is (up to rescaling, reordering and sign change of
the vectors $\beta'_j$) either $\frac{1}{4}\id$ or the matrix $M_0$ defined by \eqref{m4}. 

We claim that the second case cannot occur. Indeed, if this were the case, then
$\norm(\beta_1+\beta_2)=\frac14$,
$\norm(\beta_{2j-1}+\beta_{2j})=\frac18$ for $j=2,3,4$, and
\beq \label{prs}\<\beta_3+\beta_4,\beta_5+\beta_6>=\frac1{16}.\eeq
For every $j$ and $k$ with $3\le j<k\le 8$, there exists $l\in\{2,3,4\}$
such that $\{2l-1,2l\}\cap\{j,k\}=\emptyset$. Let $\{s,t\}$ denote the
complement of $\{1,2,j,k,2l-1,2l\}$ in $\{1,\ldots,8\}$.
The Gram matrix
of $\{\beta_1+\beta_2,
\beta_{2l-1}+\beta_{2l},\beta_j-\beta_k,\beta_s-\beta_t\}$ has at least two
different values on the diagonal. Again from (i), it follows that the remaining diagonal terms
$\norm(\beta_j-\beta_k)$ and $\norm(\beta_s-\beta_t)$ must both be equal to
$\frac18$. Thus,
$\norm(\beta_j-\beta_k)=\frac18$ for all $3\le j<k\le 8$.
By the same argument we also obtain
$\norm(\beta_j+\beta_k)=\frac18$ for all $3\le j<k\le 8$. Thus,
$\<\beta_j,\beta_k>=0$ for all $3\le j<k\le 8$, contradicting
\eqref{prs}.

This shows that the vectors $\beta'_j$ are mutually orthogonal. Applying
this to different partitions of the set $\{1,\ldots,8\}$ into four pairs
we get that $\beta_j+\beta_k$ is orthogonal to $\beta_s+\beta_t$ for
all mutually distinct subscripts $j,k,s$ and $t$. This clearly implies that
$\<\beta_i, \beta_j>=0$ for all $i\neq j$. It then also follows that
$\norm(\beta_j)=\frac{1}{8}$, for $j=1,\dots,8$, proving (b).
\end{proof}

\section{Homogeneous Clifford Structures of high rank}

A direct consequence of Propositions~\ref{wisotrop} and \ref{linalg} is the
following upper bound for the rank of a homogeneous Clifford structure:

\begin{Theorem}\label{upperbd}
The rank $r$ of any even homogeneous Clifford structure on a homogeneous compact
manifold $G/H$ of non-vanishing Euler characteristic is less or equal to $16$.
More precisely, the following restrictions hold for the rank depending on its $2$-valuation:
\begin{enumerate}
\item[(I)] If $r$ is odd, then $r\in\{3,5,7,9\}$.
\item[(II)] If $r\equiv 2\mod 4$, then $r\in\{2,6,10\}$.
\item[(III)] If $r\equiv 4\mod 8$, then $r\in\{4,12\}$.
\item[(IV)] If $r\equiv 0\mod 8$, then $r\in\{8,16\}$.
\end{enumerate}
\end{Theorem}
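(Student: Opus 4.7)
The plan is to apply Propositions \ref{wisotrop} and \ref{linalg} case by case according to the type of $\mathcal{W}$ and then sharpen the Type II bound using Lemma \ref{typeI}(b). I would first note that Proposition \ref{wisotrop} parametrizes the weights $\mathcal{W}$ of the isotropy representation by the residue of $r$ modulo $8$, while the existence of a homogeneous Clifford structure forces $\mathcal{W}$ to be an admissible subsystem of $\mathcal{R}(\gg)$.

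For three of the four types, the result follows at once from Proposition \ref{linalg}. For Type I ($r=2q+1$), part (I) gives $q\le 4$, hence $r\in\{3,5,7,9\}$. For Type III ($r=2q$, $q\equiv 2\pmod 4$), parts (III)--(IV) give $q\le 8$, and the congruence forces $q\in\{2,6\}$, so $r\in\{4,12\}$. For Type IV ($r=2q$, $q\equiv 0\pmod 4$), the same part together with the congruence gives $q\in\{4,8\}$, so $r\in\{8,16\}$. These three cases thus follow directly.

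Case (II) ($r=2q$ with $q$ odd) will be the main obstacle: Proposition \ref{linalg}(II) only gives $q\le 7$, which would allow $r\in\{2,6,10,14\}$, so a further argument is needed to rule out $r=14$. The plan here is the following. For $q=7$, Proposition \ref{linalg}(II) guarantees that every weight $\alpha_i$ of $\lambda^\C$ is nonzero. I would fix one such $\alpha_i$, set $\beta_0:=\alpha_i$, and exhibit the subset $\mathcal{P}_i:=\{\sum_{j=0}^{7}\varepsilon_j\beta_j \mid \prod_{j=0}^{7}\varepsilon_j=1\}\subset\mathcal{W}$, which has exactly the form of Lemma \ref{typeI}(b) with $q'=8$. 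The lemma then forces the Gram matrix of $\{\alpha_i,\beta_1,\ldots,\beta_7\}$ to equal $\frac{1}{8}\id_8$, so these eight vectors are mutually orthogonal with common squared norm $\frac{1}{8}$ and every element of $\mathcal{W}$ is a unit vector obtained as a signed sum of the eight.

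The final and most delicate step will be to convert this rigidity into a contradiction. All $\alpha_i$ must lie in the orthogonal complement of $\mathrm{span}(\beta_1,\ldots,\beta_7)$, and since $\mathcal{W}$ spans $\tt^*$ the rank $n$ of $\gg$ equals $7+\dim\mathrm{span}\{\alpha_i\}$. A root-count using $\#\mathcal{W}=128p$ together with the classification of root systems of rank $n$ (at most $2n^2$ roots for classical types, maximum $240$ attained at $n=8$ by $\E_8$) will collapse the possibilities to $p=1$, $n=8$, $\gg=\E_8$, $\hh=\so(16)$. But then $G/H=\E_8/(\Spin(16)/\ZM_2)$ is precisely the symmetric space realizing the $r=16$ limiting case of Type IV. To finish, I would argue that since $\Spin(16)$ is simple of dimension $120>91=\dim\SO(14)$, the homomorphism $\rho:H\to\SO(14)$ must be trivial; the $H$-equivariance of $\varphi$ then forces $\varphi(\so(14))$ to commute with the irreducible $H$-action on the real half-spin module $\mm=\S^+_{16}$, so by Schur's lemma over $\R$ each $\varphi(A)$ is a scalar, contradicting its skew-symmetry. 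This excludes $r=14$ and completes the proof.
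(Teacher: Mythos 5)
Your treatment of types (I), (III) and (IV) is exactly the paper's: these bounds follow immediately from Propositions \ref{wisotrop} and \ref{linalg}, and you correctly isolate the exclusion of $r=14$ in type (II) as the only substantive step (the paper likewise defers this to the proof of Theorem \ref{h-types} (II)). You also start that step the same way the paper does: set $\beta_0:=\alpha_1$, apply Lemma \ref{typeI}(b) to get the Gram matrix $\tfrac18\id_8$. From there the routes diverge. The paper generates the new roots $\pm2(\beta_i\pm\beta_j)$ by reflections, concludes $\mathcal{R}(\so(16))\subseteq\mathcal{R}(\hh)$, places $\so(16)$ inside $\hh_1=\ker\rho_*$ by the same dimension count you use at the end ($120>91$), deduces $p\ge 8$ from faithfulness of the isotropy representation, and then contradicts this with a short integrality computation: for $p\ge2$ the scalar products between weights containing $\alpha_1$ and those containing $\alpha_2$ force $\<\alpha_1,\alpha_2>=-\tfrac38$, whence $\norm(\alpha_1+\alpha_2)<0$. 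So the paper never needs to identify $\gg$; the whole contradiction is ``$p\ge8$ versus $p=1$''. Your Schur's lemma endgame on $\Sigma_{16}^+$ is a valid alternative finish, but it only applies after you have pinned down $\hh=\so(16)$ and $\mm=\Sigma_{16}^+$ completely, which is where your argument is soft.

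Concretely, two points in your collapse need repair. First, $\mathcal{W}$ need not span $\tt^*$ (extra ideals of $\gg$ contained in $\hh$ are not excluded a priori), so ``$n=7+\dim\mathrm{span}\{\alpha_i\}$'' is unjustified; you should instead work with the root system $\mathcal{R}(\gg)\cap\mathrm{span}(\mathcal{W})$, which is a root system of rank exactly $7+\dim\mathrm{span}\{\alpha_i\}$ containing $\mathcal{W}$ and the $112$ roots of $\so(16)$. Second, the reduction to $p=1$ is asserted rather than proved: your bound ``at most $2n^2$ roots'' compares two quantities that both grow with $p$, and the case of several non-proportional $\alpha_i$ is not actually eliminated by counting alone as stated. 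The efficient fix is the paper's: since each $\{\alpha_i,\beta_1,\dots,\beta_7\}$ has Gram matrix $\tfrac18\id_8$, all $\alpha_i$ have square norm $\tfrac18$, and the admissibility/integrality condition \eqref{prodscal} applied to pairs of weights built on $\alpha_1$ and $\alpha_2$ forces $\<\alpha_1,\alpha_2>=-\tfrac38$, which violates Cauchy--Schwarz (equivalently gives $\norm(\alpha_1+\alpha_2)<0$). With $p=1$ in hand, your identification of $\E_8/(\Spin(16)/\ZM_2)$ and the Schur obstruction do complete the proof, so the proposal is salvageable, but as written the key reduction is a gap.
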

\begin{proof}
We only need to show that in case (II), the rank $r$ is strictly less than $14$. 
This will be done in the proof of Theorem \ref{h-types} (II).
\end{proof}

We further describe the manifolds which occur in the limiting cases for the upper bounds in
Theorem~\ref{upperbd}. 

\begin{Theorem}\label{h-types}
The maximal rank $r$ of an even homogeneous Clifford structure for each of the types (I)-(IV)
and the corresponding compact homogeneous manifolds $M=G/H$ (with $\rk(G)=\rk(H)$)
carrying such a structure are the following:
\begin{enumerate}
\item[(I)] $r=9$ and $M$ is the Cayley projective space
$\mathbb{O}\mathbb{P}^2=\mathrm{F}_4/\mathrm{Spin}(9)$.
\item[(II)] $r=10$ and $M=(\mathbb{C}\otimes\mathbb{O})\mathbb{P}^2=
\mathrm{E}_6/(\mathrm{Spin}(10)\times\mathrm{U}(1)/\ZM_4)$.
\item[(III)] $r=12$ and $M=(\mathbb{H}\otimes\mathbb{O})\mathbb{P}^2=
\mathrm{E}_7/\mathrm{Spin}(12)\cdot\mathrm{SU}(2)$.
\item[(IV)] $r=16$ and $M=(\mathbb{O}\otimes\mathbb{O})\mathbb{P}^2=
\mathrm{E}_8/(\mathrm{Spin}(16)/\mathbb{Z}_2)$.
\end{enumerate}
\end{Theorem}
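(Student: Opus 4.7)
The plan is to treat the four cases (I)--(IV) via a common strategy: use Proposition \ref{wisotrop} to parameterize the weights $\mathcal{W}$ of the isotropy representation by the auxiliary vectors $\alpha_i,\gamma_i,\beta_j$, use Proposition \ref{linalg} and Lemma \ref{typeI} to pin down the Gram matrix of the $\beta_j$ in the limiting rank, and then reconstruct $\mathcal{R}(\gg)=\mathcal{R}(\hh)\sqcup\mathcal{W}$ to identify $\gg$ as one of $\mathfrak{f}_4,\mathfrak{e}_6,\mathfrak{e}_7,\mathfrak{e}_8$ together with the stated subalgebra $\hh$.

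For Case (I), $r=9$ (so $q=4$), Proposition \ref{linalg}(I) forces every $\alpha_i=0$, and the simplicity of the weights of $\mm\otimes_\R\C$ then forces $p=1$, so $\mathcal{W}=\{\sum_{j=1}^4\varepsilon_j\beta_j\}_{\varepsilon\in\mathcal{E}_4}$ consists of $16$ vectors. By Lemma \ref{typeI}(i) the $\beta_j$ are pairwise orthogonal with $\|\beta_j\|^2=1/4$, hence $\overline{\mathcal{W}}\setminus\mathcal{W}=\{\pm 2\beta_j\}\subset\mathcal{R}(\hh)$. Since $\rk(\hh)=\rk(\gg)=4$, the classification of rank-$4$ compact root systems containing these $24$ vectors forces $\mathcal{R}(\gg)=\mathcal{R}(\mathfrak{f}_4)$ and $\mathcal{R}(\hh)=\mathcal{R}(\so(9))$, giving $G/H=\mathrm{F}_4/\Spin(9)$. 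Case (IV), $r=16$ with $q=8$, is parallel: Lemma \ref{typeI}(b) gives the $\beta_j$ mutually orthogonal with $\|\beta_j\|^2=1/8$, and the $128$ vectors of $\mathcal{W}$ together with the $112$ roots of $\mathcal{R}(\so(16))\subset\mathcal{R}(\hh)$ exhaust the $240$ roots of $\mathfrak{e}_8$.

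For Case (II), the value $r=14$ (permitted by Proposition \ref{linalg}(II)) must first be ruled out. Following the trick used in the proof of that proposition, setting $\beta_0:=\alpha_1$ reduces the $q=7$ configuration to a Type (III)--(IV) subsystem with $q+1=8$ and vanishing $\alpha$-part, to which Lemma \ref{typeI}(b) applies: the Gram matrix of $(\alpha_1,\beta_1,\ldots,\beta_7)$ is forced to be $\frac18\id_8$, and the resulting $128$-element $\mathcal{W}$ cannot be complemented to a valid rank-$8$ root system of compact type. For $r=10$ ($q=5$) the analogous analysis identifies $\gg=\mathfrak{e}_6$ and $\hh=\so(10)\oplus\u(1)$. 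Case (III) with $r=12$ ($q=6$) uses the reduction $\beta'_j:=\beta_{2j-1}+\beta_{2j}$ from the proof of Proposition \ref{linalg}(III)--(IV) and yields $\gg=\mathfrak{e}_7$, $\hh=\so(12)\oplus\su(2)$.

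The main obstacle I expect is the precise identification of $\hh$ beyond the part coming from $\overline{\mathcal{W}}\setminus\mathcal{W}$. The decomposition $\hh=\hh_0\oplus\hh_1\oplus\hh_2$ from \eqref{hspl} together with Lemmas \ref{sumroots} and \ref{mixedroots} are the principal tools: the summand $\hh_1=\ker(\rho_*)$ has roots orthogonal to all $\beta_j$, while $\hh_2=\ker(\lambda)$ is responsible for the $\u(1)$ or $\su(2)$ factor appearing in Cases (II) and (III). A secondary obstacle is the exclusion of $r=14$ in Case (II), where one must carefully exploit the integrality conditions \eqref{prodscal}--\eqref{newroots} to derive the final contradiction.
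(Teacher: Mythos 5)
Your overall strategy matches the paper's, but two of the concrete claims on which your argument rests are false, and the bulk of cases (II) and (III) is asserted rather than proved. First, in case (I) you claim that ``the classification of rank-$4$ compact root systems containing these $24$ vectors forces $\mathcal{R}(\gg)=\mathcal{R}(\mathfrak{f}_4)$''. It does not: the $24$ vectors $\{\pm\sum\varepsilon_j\beta_j\}\cup\{\pm2\beta_j\}$ form a copy of $\mathcal{R}(\so(8))$, which is itself a rank-$4$ root system and is also contained in $B_4$, $C_4$ and $\mathfrak{f}_4$. In particular, root-system considerations alone do not exclude $\gg=\so(8)$, $\hh=\so(4)\oplus\so(4)$ (the Grassmannian $\mathrm{Gr}_4(\RM^8)$, whose isotropy weights have exactly this shape). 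The paper excludes this by a representation-theoretic argument you omit: since all $\alpha_i=0$, the representation $\lambda$ is trivial, so $\rho_*:\hh\to\so(9)$ is injective; a count of roots forces $\rk(\hh)=4$, so $\rho_*$ identifies the Cartan subalgebras and $\pm2\beta_i=\pm e_i\circ\rho_*$; then Lemma \ref{sumroots} applied to $\hh\subseteq\so(9)$ forces all roots $\pm e_i\pm e_j$ of $\so(9)$ to be roots of $\hh$, whence $\hh=\so(9)$ and only then $\gg=\mathfrak{f}_4$. Without this step the identification of $G/H$ fails.

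Second, your exclusion of $r=14$ is based on the claim that the resulting $128$-element $\mathcal{W}$ ``cannot be complemented to a valid rank-$8$ root system of compact type''. It can: with Gram matrix $\frac18\id_8$ one gets $\overline{\mathcal{W}}=\mathcal{R}(\mathfrak{e}_8)$ and $\overline{\mathcal{W}}\setminus\mathcal{W}=\mathcal{R}(\so(16))$, a perfectly valid configuration (it is exactly the one realized in case (IV)). The actual contradiction in the paper is again representation-theoretic: $\so(16)\subseteq\mathcal{R}(\hh)$ cannot sit inside $\hh_0\oplus\hh_2\subseteq\so(14)$, so $\so(16)\subseteq\hh_1=\ker(\rho_*)$, which forces $\lambda$ to be a faithful representation of $\so(16)$ and hence $p\geq 8$; meanwhile the integrality condition \eqref{prodscal} applied to $\<\alpha_1,\alpha_2>$ forces $p=1$. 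Finally, for $r=10$ and $r=12$ the phrases ``the analogous analysis identifies\dots'' conceal the actual work: ruling out the Gram matrices $M_2$ and $M_3$ of \eqref{m3n}, deciding between $\norm(\alpha_1)=\frac34$ and $\frac14$, proving $p=1$ (and $p'=0$ in case (III)), and using Lemma \ref{mixedroots} to locate $\so(10)$ resp.\ $\so(12)$ inside the correct summand of the splitting \eqref{hspl} so that $\hh_1$ is exactly $\u(1)$ resp.\ $\su(2)$. As written, the proposal would not yield the uniqueness statements of the theorem.
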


\begin{proof}
Let $M=G/H$ ($\rk(G)=\rk(H)$) carry a homogeneous Clifford structure of rank $r$.

(I) If $r$ is odd, $r=2q+1$, then it follows from Theorem~\ref{upperbd} (I) that $r\leq 9$.

By Proposition~\ref{wisotrop} (I) and Lemma~\ref{typeI} (i), the set $\mathcal{W}:=\mathcal{W}(\mathfrak{m})$ 
of weights of the isotropy representation is 
$\mathcal{W}=\{\overset{q}{\underset{j=1}
{\sum}}\varepsilon_j\beta_j\}_{\varepsilon\in\mathcal{E}_q}$ with $\#\mathcal{W}=2^{q}$ 
and $\mathcal{R}(\mathfrak{so}(8))\subseteq\mathcal{R}(\mathfrak{g})$.
In particular the representation $\lambda$ is trivial, so $\mathfrak{h}=\mathfrak{h}_2$ and $\#\mathcal{R}(\mathfrak{g})\geq 24$.

Since $\rho_*:\mathfrak{h}_2\to\so(9)$ is
injective and $\mathfrak{h}=\mathfrak{h}_2$,
it follows that $\rk(\mathfrak{h})\leq 4$.

If $\rk(\mathfrak{h})\leq 3$, then $\#\mathcal{R}(\mathfrak{g})\leq 18$
by a direct check in the list of Lie algebras of rank $3$. This contradicts the fact that
$\#\mathcal{R}(\mathfrak{g})\geq 24$.

Thus, $\rk(\mathfrak{h})=4$ and $\rho_*$ is a
bijection when restricted to a Cartan subalgebra of
$\mathfrak{h}$. On the other hand, from Lemma~\ref{typeI} (i), it
also follows that the Gram matrix $(\<\beta_i,\beta_j>)_{ij}$
is equal to $\frac{1}{4}\id_4$. The new roots obtained
by reflections given by \eqref{newroots} are the following:
$\pm 2\beta_i=\pm e_i\circ\rho_*\in\mathcal{R}(\mathfrak{h})$, for all
$i=\overline{1,4}$. As $\rk(\mathfrak{h})=4=\rk(\so(9))$, we may apply
Lemma~\ref{sumroots} for $\mathfrak{h}\subseteq\mathfrak{so}(9)$
and get that $\{\pm e_i\pm e_j \,|\, 1\leq
i<j\leq 4\}$, which are roots of $\so(9)$, are also roots of
$\mathfrak{h}$. Thus, $\mathfrak{h}=\so(9)$.
The Lie algebra $\mathfrak{g}$, whose system of roots is obtained by
joining the system of roots of $\so(9)$ with the weights of the
spinorial representation of $\spin(9)$, is then exactly $\mathfrak{f}_4$
(cf. \cite[p. 55]{a}).
We note that we cannot extend $\mathfrak{f}_4$, since there is no other
larger Lie algebra of the same rank.
Using the fact that the closed subgroup of $\F_4$ corresponding to the above embedding of $\so(9)$ 
in $\mathfrak{f}_4$ is $\Spin(9)$ (cf. \cite[Thm. 6.1]{a}), we deduce that the only homogeneous manifold carrying a
homogeneous Clifford structure of rank $9$ is
the Cayley projective space $\mathbb{O}\mathbb{P}^2=\mathrm{F}_4/\mathrm{Spin}(9)$.

(II) By Theorem~\ref{upperbd} (II), $r\leq 14$. We first show that
there exists no homogeneous Clifford structure of rank $r=14$.

Let $r=2q=14$. In this case, by Proposition~\ref{wisotrop} (II), the set of weights of
the isotropy representation is 
$\mathcal{W}:=\mathcal{W}(\mathfrak{m})=\left\{(\overset{7}{\underset{j=1}{\prod}}\varepsilon_j)
\alpha_i+\overset{7}{\underset{j=1}{\sum}}\varepsilon_j\beta_j \right\}_{
  i=\overline{1,p},  \varepsilon\in\mathcal{E}_7}$ with $\#\mathcal{W}=p\cdot 2^{7}$.
Proposition~\ref{linalg} (II) yields that $\alpha_i\neq 0$, for all $1\leq i\leq p$.

We claim that the following inclusion holds:
$\mathcal{R}(\mathfrak{so}(16))\subseteq\mathcal{\overline{W}}\setminus \mathcal{W}$.
This can be seen as follows.
Denoting by $\beta_0:=\alpha_1$ and $\beta:=\beta_0+\dots+\beta_7$,
the set $\mathcal{W}$ contains the following subsystem of roots
$\{\overset{7}{\underset{j=0}{\sum}}\varepsilon_j\beta_j |\, 
\overset{7}{\underset{j=0}{\prod}}\varepsilon_j=1\}_{\varepsilon\in\mathcal{E}_8}$.
From Lemma~\ref{typeI} (b) it follows that the Gram matrix $(\<\beta_i, \beta_j>)_{i,j=\overline{0,7}}$ is equal to  $\frac{1}{8}\id_8$.
Then, for all $0\leq i< j\leq 7$ we have $\<\beta,\beta-2\beta_i-2\beta_j>=\frac{1}{2}$,
implying by \eqref{newroots} that there are new roots $\pm2(\beta_i+\beta_j)\in\mathcal{\overline{W}}\setminus \mathcal{W}$.
Similarly, for any $0\leq k\leq 7$ distinct from $i$ and $j$,  $\<\beta-2\beta_i-2\beta_k,\beta-2\beta_j-2\beta_k>=\frac{1}{2}$
yields the new roots $\pm2(\beta_i-\beta_j)\in\mathcal{\overline{W}}\setminus \mathcal{W}$.
It thus follows that
$\mathcal{R}(\mathfrak{so}(16))=\{\pm2(\beta_i\pm\beta_j)\,|\, 0\leq i\leq 7\}
\subseteq\mathcal{\overline{W}}\setminus \mathcal{W}$.

Since $\mathcal{R}(\mathfrak{so}(16))\subseteq\mathcal{\overline{W}}\setminus \mathcal{W}\subseteq\mathcal{R}(\mathfrak{h})$,
 it follows that $\mathfrak{so}(16)$ is a Lie subalgebra of $\mathfrak{h}$. Recall the splitting \eqref{hspl}:
$\mathfrak{h}=\mathfrak{h}_0\oplus \mathfrak{h}_{1}\oplus\mathfrak{h}_2$, 
where $\mathfrak{h}_0\oplus\mathfrak{h}_2\subseteq\mathfrak{so}(14)$. 
As $\mathfrak{so}(16)$ is a simple Lie algebra, it follows that
$\mathfrak{so}(16)\subseteq\mathfrak{h}_1$. In particular, this implies $p\geq 8$.

On the other hand, we show that $p=1$. Assume that $p\geq 2$. By Lemma~\ref{typeI} (b),
the Gram matrix of both subsystems of roots $\{\alpha_1,\beta_1,\dots,\beta_7\}$ 
and $\{\alpha_2,\beta_1,\dots,\beta_7\}$ is equal to $\frac{1}{8}\id_8$. 
Denoting by $a:=\<\alpha_1,\alpha_2>$, we obtain the following values for the scalar products
between the roots containing $\alpha_1$ and $\alpha_2$:
$\{a+\frac{7}{8}, a+\frac{3}{8}, a-\frac{1}{8}\}$. From \eqref{prodscal},
we know that these values must belong to $\{0,\pm\frac{1}{2}\}$. It then 
follows that the only possible value for $a=\<\alpha_1,\alpha_2>$ is $-\frac{3}{8}$. This leads to a contradiction
by computing the following norm: $\norm(\alpha_1+\alpha_2)=-\frac{1}{2}<0$.

Thus, the case $r=14$ is not possible.

Now, for rank $r=10=2q$, by Proposition~\ref{wisotrop} (II),
the set of weights of the isotropy representation is 
 $\mathcal{W}:=\mathcal{W}(\mathfrak{m})=\{(\overset{5}{\underset{j=1}
{\prod}}\varepsilon_j)\alpha_i+\overset{5}
{\underset{j=1}{\sum}}\varepsilon_j\beta_j \}_{ i=\overline{1,p},
\varepsilon\in\mathcal{E}_5}$ with $\#\mathcal{W}=p\cdot 2^{5}$.
From Proposition~\ref{linalg} (II), it then follows that $\alpha_i\neq 0$,
for all $1\leq i\leq p$.
We will further show that $p=1$ and
the following inclusions hold: 
$\mathcal{R}(\mathfrak{spin}(10)\oplus\mathfrak{u}(1))\subseteq\mathcal{\overline{W}}\setminus\mathcal{W}$,
$\mathcal{R}(\mathfrak{e}_{6})\subseteq \mathcal{\overline{W}}$.

Let $\beta_0:=\alpha_1$ and $\beta'_j:=\beta_{2j}+\beta_{2j+1}$, for $j=0,1,2$. Then
$\{\sum_{j=0}^{2}\varepsilon_j\beta'_j\}\subset\mathcal{W}$
is an admissible subsystem of roots. By Lemma~\ref{typeI} (ii), the Gram matrix
$B':=(\<\beta'_i, \beta'_j>)_{i,j=\overline{0,2}}$ is one of the three matrices
in \eqref{m3}. We may assume that $\beta'_0$ is of maximal norm
and equal to $1$, so that the possible Gram matrices $B'$ are normalized as follows:
\begin{equation}\label{m3n}
M_1:=\left(\begin{array}{ccc}
1 & 0 & 0\\
0 & \frac{1}{2} & 0 \\
0 & 0 & \frac{1}{2} \\
\end{array}\right), M_2:=\left(\begin{array}{ccc}
1 & 0 & \frac{1}{2} \\
0 & \frac{3}{4} & 0 \\
\frac{1}{2} & 0 & \frac{1}{4} \\
\end{array}\right), M_3:=\left(\begin{array}{ccc}
1 & \frac{1}{6} & \frac{1}{6} \\
\frac{1}{6} & \frac{1}{3} & \frac{1}{6} \\
\frac{1}{6} & \frac{1}{6} & \frac{1}{3} \\
\end{array}\right).
\end{equation}

We first note that the norm of $\beta:=\sum_{j=0}^{5}\beta_j$,
which is equal to the sum of all elements of the matrix $B'$,
may take the following values: $\norm(\beta)=2$ for $M_1$,
$\norm(\beta)=3$ for $M_2$, $\norm(\beta)=\frac{8}{3}$ for $M_3$.
We will show that the last two cases cannot occur.

Let us first assume that $B'=M_2$. In this case
$\beta'_0=2\beta'_2$, \emph{i.e.} $\beta_0+\beta_1=2(\beta_4+\beta_5)$. 
Considering now another pairing by permuting the subscripts
$2,3,4,5$, we get a Gram matrix which must also
be equal to $M_2$, since $\norm(\beta)$ does not change.
We may thus assume that $\beta_0+\beta_1=2(\beta_2+\beta_4)$ and is furthermore equal either
to $2(\beta_2+\beta_5)$ or to $2(\beta_3+\beta_4)$. 
In both cases it follows that there exists $i\neq j\in\{2,3,4,5\}$, such that
$\beta_i=\beta_j$. Then, for any $k\neq i,j$, the roots 
$\beta-2\beta_i-2\beta_k, \beta-2\beta_j-2\beta_k\in \mathcal{W}$ are equal,
which contradicts $\#\mathcal{W}=p\cdot 2^{5}$. Thus, $B'\neq M_2$.

Let us now assume that the Gram matrix $B'$ is equal to $M_3$. Then
$\norm(\beta)=\frac{8}{3}$ and since this is the maximal norm, it follows that
for any other possible pairing of the vectors $\beta_j$, the corresponding Gram matrix
is either $M_1$ or $M_3$ (because the sum of all elements of $M_2$ is $3>\frac{8}{3}$).

We consider as above other pairings by permuting the subscripts $\{2,3,4,5\}$.
Again by Lemma~\ref{typeI} (ii), it follows that the corresponding Gram matrix
is one of the matrices in \eqref{m3n} and, since  $\norm(\beta)$ does not change,
it must also be equal to $M_3$.
In particular, we have:
\begin{equation}\label{n+}
\norm(\beta_i+\beta_j) =\frac{1}{3},
\end{equation}
\begin{equation}\label{sp+}
\<\beta_i+\beta_j, \beta_k+\beta_l>=\frac{1}{6},
\end{equation}
for any permutation $(i,j,k,l)$ of $(2,3,4,5)$.

Consider now the following pairings of the vectors $\beta_j$:
\[\beta''_0:=\beta_0+\beta_1, \beta''_1:=\pm(\beta_i-\beta_j), \beta''_2:=\pm(\beta_k-\beta_l),\]
where $(i,j,k,l)$ is any permutation of $(2,3,4,5)$
and in each case the signs for $\beta''_1$ and $\beta''_2$ are chosen such that
$$\norm(\beta''_0+\beta''_1+\beta''_2)=\max\{\norm(\beta''_0\pm\beta''_1\pm\beta''_2)\}.$$
Then $\{\sum_{j=0}^{2}\varepsilon_j\beta''_j\}$ is a subsystem of roots of $\mathcal{W}$
and by the same argument as above its Gram matrix
$B'':=(\<\beta''_i, \beta''_j>)_{i,j=\overline{0,2}}$ is either $M_1$ or $M_3$.

Since $\norm(\beta''_0)=1$, it follows that in both cases
the norms of the other two vectors are equal: $\norm(\beta''_1)=\norm(\beta''_2)$,
\emph{i.e.} $\norm(\beta_i-\beta_j)=\norm(\beta_k-\beta_l)\in\{\frac{1}{2},\frac{1}{3}\}$.
By \eqref{n+}, we then obtain for any $i,j\in\{2,3,4,5\}$ that
$\<\beta_i,\beta_j>=\frac{1}{4}(\norm(\beta_i+\beta_j)-\norm(\beta_i-\beta_j))\in\{0,-\frac{1}{24}\}$.
It then follows that $\<\beta_i+\beta_j, \beta_k+\beta_l><0$,
for any permutation $(i,j,k,l)$ of $(2,3,4,5)$,
which contradicts \eqref{sp+}. Thus, $B'\neq M_3$.

The only possibility left is $B'=M_1$. Then $\norm(\beta)=2$ and since it is the element of maximal norm,
it follows that for any other pairing of the vectors $\beta_j$, the corresponding Gram matrix is also equal to $M_1$.
We then have $B'=B''=M_1$, which implies:
\begin{equation}\label{sp01}
\<\beta_0+\beta_1, \beta_i\pm\beta_j>=0,
\end{equation}
\begin{equation}\label{n+1}
\norm(\beta_i+\beta_j)=\norm(\beta_i-\beta_j)=\frac{1}{2},
\end{equation}
\begin{equation}\label{sp+1}
\<\beta_i+\beta_j, \beta_k+\beta_l>=\<\beta_i-\beta_j, \beta_k-\beta_l>=0,
\end{equation}
for any permutation $(i,j,k,l)$ of $(2,3,4,5)$.
From \eqref{sp01} it follows that $\<\beta_0+\beta_1, \beta_i>=0$  for all $2\leq i\leq 5$.
From \eqref{sp+1} it follows that $\<\beta_i,\beta_j>=0$, for $2\leq i<j\leq 5$ and then from \eqref{n+1}
we obtain $\norm(\beta_i)=\frac{1}{4}$, for $2\leq i\leq 5$.

For pairings of the following form $\beta_0-\beta_1$, $\beta_i-\beta_j$, $\beta_k+\beta_l$,
where again $(i,j,k,l)$ is a permutation of $(2,3,4,5)$, the corresponding Gram matrix must also be equal to $M_1$.
Since $\norm(\beta_i\pm\beta_j)=\frac{1}{4}$, for all $2\leq i<j\leq 5$, it follows that 
$\norm(\beta_0-\beta_1)=1$, so $\<\beta_0,\beta_1>=0$.
By the above argument applied to this pairing, it follows a similar relation, namely:
$\<\beta_0-\beta_1, \beta_i\pm\beta_j>=0$,
which together with \eqref{sp01} yields $\<\beta_0,\beta_i>=\<\beta_1,\beta_j>=0$, for $2\leq i\leq 5$.
Thus, the Gram matrix $(\<\beta_i,\beta_j>)_{0\leq i,j\leq 5}$ is diagonal with $\norm(\beta_0)+\norm(\beta_1)=1$
and $\norm(\beta_i)=\frac{1}{4}$, for $2\leq i\leq 5$.

The second element in the decreasing order of the set $\{\norm(\beta_i+\beta_j) \,|\, 0\leq i<j\leq 5\}$ must be,
up to a permutation of $0$ and $1$, of the form $\beta_0+\beta_k$, for some $2\leq k\leq 5$. 
By taking now, for instance, a pairing with first element equal to $\beta_0+\beta_k$,
it follows that its Gram matrix is equal to $M_1$ and by the same argument as above
 $\norm(\beta_1)=\frac{1}{4}$ and, consequently, $\norm(\beta_0)=\frac{3}{4}$.
Since we allowed permutations, we have to consider two cases:
$\norm(\alpha_1)\in\{\frac{3}{4}, \frac{1}{4}\}$.

If $\norm(\alpha_1)=\frac{1}{4}$, we may assume that $\norm(\beta_1)=\frac{3}{4}$ and 
$\norm(\beta_i)=\frac{1}{4}$, for all $2\leq i\leq 5$.
Since the Gram matrix is now completely known, we compute the scalar products between the roots in $\mathcal{W}$
and by \eqref{newroots} we obtain: $\{\pm2(\alpha_1\pm\beta_i), \pm2(\beta_j\pm\beta_i) |\, 2\leq i<j\leq 5\}
\subseteq\mathcal{\overline{W}}\setminus\mathcal{W}\subseteq\mathcal{R}(\mathfrak{h})$.
Considering the orthogonal decomposition \eqref{hspl} of $\mathfrak{h}$, it follows by Lemma~\ref{mixedroots} (ii) that
there is a $k\in\{0,1,2\}$ such that 
$\{\pm2(\alpha_1\pm\beta_i), \pm2(\beta_j\pm\beta_i) |\, 2\leq i<j\leq 5\}\subseteq\mathcal{R}(\mathfrak{h}_k)$.
Since $\alpha_1\in\hh_0\oplus\hh_1$ and  $\beta_i\in\hh_0\oplus\hh_2$, for $1\leq i\leq 5$, the only possible value is 
$k=0$. This implies that $\mathfrak{so}(10)\subseteq\mathfrak{h}_0$ and thus $p\geq 5$. We show that this is not possible.

Assuming that $p\geq 2$ and computing the scalar products between 
$\alpha_1+\beta_1+\cdots+\beta_5$ and $\alpha_2+\beta_1\pm(\beta_2+\beta_3)\pm(\beta_4+\beta_5)$,
we get the following values $\{a+\frac{7}{4},a+\frac{3}{4},a-\frac{1}{4}\}$, where $a:=\<\alpha_1,\alpha_2>$. By \eqref{prodscal}, 
we know that $\{a+\frac{7}{4},a+\frac{3}{4},a-\frac{1}{4}\}\subseteq\{0,\pm1\}$.
Hence, $a=-\frac{3}{4}$, which implies that $\norm(\alpha_1+\alpha_2)=-1$. 
Thus, the case $\norm(\alpha_1)=\frac{1}{4}$ may not occur.

We then have $\norm(\alpha_1)=\frac{3}{4}$ and $\norm(\beta_i)=\frac{1}{4}$, for $1\leq i\leq 5$. 
Again by computing all possible scalar products, we produce by \eqref{newroots}
the new roots $\{\pm2(\beta_i\pm\beta_j) |\, 1\leq i<j\leq 5\}\subseteq\mathcal{R}(\mathfrak{h})$.
By Lemma~\ref{mixedroots} (i) and (ii), there exists a $k\in\{0,1,2\}$ such that
$\{\pm2(\beta_i\pm\beta_j) |\, 1\leq i<j\leq 5\}$ are all roots of one of the
components $\mathfrak{h}_k$ of the orthogonal splitting $\mathfrak{h}=\mathfrak{h}_0\oplus\mathfrak{h}_1\oplus\mathfrak{h}_2$ 
given by \eqref{hspl}. As $\beta_i\in\hh_0\oplus\hh_2$ for $1\leq i\leq 5$,
it follows that $k\in\{0,2\}$. 
Thus, $\mathcal{R}(\mathfrak{h}_k)$ contains the whole system of roots of $\mathfrak{so}(10)$.
On the other hand, $\mathfrak{h}_0\oplus\hh_2\subseteq\mathfrak{so}(10)$.
Hence, there are two possibilities:
either $\mathfrak{h}_0=\mathfrak{so}(10)$ and $\mathfrak{h}_2=0$ or
$\mathfrak{h}_0=0$ and $\mathfrak{h}_2=\mathfrak{so}(10)$.

Let us first note that if $p\geq 2$, then $\<\alpha_i,\alpha_j>=-1$, for all $1\leq i<j\leq p$.
By computing the scalar products between the different roots containing 
$\alpha_i$, respectively $\alpha_j$, we obtain the following values:
$a:=\<\alpha_i,\alpha_j>\in\{a+\frac{5}{4},a+\frac{1}{4},a-\frac{3}{4}\}$, which
by \eqref{prodscal} must be contained into $\{0,\pm1\}$.
Hence, the only possible value is $a=-\frac{1}{4}$.

In the first case, $\mathfrak{h}_0=\mathfrak{so}(10)$ implies that $p\geq 5$, 
which by the above remark leads to the following contradiction: $\norm(\alpha_1+\cdots+\alpha_5)=-\frac{5}{4}<0$.

Thus, the second case $\mathfrak{h}_2=\mathfrak{so}(10)$ and $\mathfrak{h}_0=0$ must hold. 
We show that $p=1$. Assuming $p\geq 2$, we compute 
$\<\alpha_1+\beta_1+\cdots+\beta_5,\alpha_2+\beta_1-\beta_2-\cdots-\beta_5>=-1$, which by \eqref{newroots}
yields the new mixed root
$\alpha_1+\alpha_2+2\beta_1\in\mathcal{R}(\mathfrak{h})$, 
contradicting Lemma~\ref{mixedroots} (i) (since $\alpha_1+\alpha_2\in\mathfrak{h}_1$
and $\beta_1\in\hh_2$). Thus, $p=1$ and
$\mathfrak{h}_1=\mathfrak{u}(1)$.

Concluding, it follows that $\mathfrak{h}=\mathfrak{so}(10)\oplus\mathfrak{u}(1)$.
Therefore, $\mathcal{R}(\mathfrak{g})=\mathcal{W}\cup\mathcal{R}(\mathfrak{so}(10)\oplus\mathfrak{u}(1))$, 
is exactly the system of roots of $\mathfrak{e}_{6}$ (cf. \cite[p. 57]{a}),
hence $\mathfrak{g}=\mathfrak{e}_{6}$.
From \cite[Thm. 6.1]{a}, the Lie subgroup of $\mathrm{E}_6$ corresponding to
the above embedding of $\mathfrak{so}(10)\oplus\mathfrak{u}(1)$ in $\mathfrak{e}_{6}$
is $\mathrm{Spin}(10)\times \mathrm{U}(1)/\ZM_4$, showing that the only homogeneous manifold carrying a homogeneous
Clifford structure of rank $r=10$ is
the exceptional symmetric space
$(\mathbb{C}\otimes\mathbb{O})\mathbb{P}^2=
\mathrm{E}_6/(\mathrm{Spin}(10)\times\mathrm{U}(1)/\ZM_4)$.

(III) By Theorem~\ref{upperbd} (III), the maximal rank in this case
is $r=12$. For $r=12=2q$, from Proposition~\ref{wisotrop} (III),
there exist $\A:=\{\alpha_1,\ldots,\alpha_p\}$ and $\G:=\{\gamma_1,\ldots,\gamma_{p'}\}$
in $\tt^*$ with $\A=-\A$ and $\G=-\G$ such that
the set of weights of the isotropy representation is given by:
 $$\mathcal{W}(\mathfrak{m})=\A+\left\{\overset{6}{\underset{j=1}{\sum}}
\varepsilon_j\beta_j |\, \overset{6}{\underset{j=1}{\prod}}
\varepsilon_j=1\right\}_{\varepsilon\in\mathcal{E}_6}
\bigcup\G+\left\{\overset{6}{\underset{j=1}{\sum}}
\varepsilon_j\beta_j |\, \overset{6}{\underset{j=1}
{\prod}}\varepsilon_j=-1\right\}_{\varepsilon\in\mathcal{E}_6}$$
with $\#\mathcal{W}=(p+p')\cdot 2^{5}$, where one of $p$ or $p'$ might vanish, but
the vectors $\alpha_i$ and $\gamma_i$ are all non-zero.

Assume $p\neq 0$ (otherwise the same argument applies for $p'\neq 0$ by changing the sign of $\beta_1$) and denote by
\[\mathcal{W}:=\A+\left\{\overset{6}{\underset{j=1}{\sum}}
\varepsilon_j\beta_j |\, \overset{6}{\underset{j=1}{\prod}}
\varepsilon_j=1\right\}_{\varepsilon\in\mathcal{E}_6},\]
which is an admissible subsystem of roots as in Proposition~\ref{linalg} (III), with
$\#\mathcal{W}=p\cdot 2^{5}$.

We claim that the following inclusions hold:
$\mathcal{R}(\so(12)\oplus\su(2))\subseteq\overline{\mathcal{W}}\setminus\mathcal{W}$,
$\mathcal{R}(\mathfrak{e}_7)\subseteq\overline{\mathcal{W}}$.

Let us consider all the subsystems of roots of the following form 
$\{\sum_{j=1}^{4}\varepsilon_j \beta'_j\}\subset\mathcal{W}$, where $\beta'_1=\alpha_1$,
$\beta'_2=\beta_1\pm\beta_2$, $\beta'_3=\beta_3\pm\beta_4$,
$\beta'_4=\beta_5\pm\beta_6$ and such that
the number of minus signs in $\beta'_2, \beta'_3$ and $\beta'_4$ is even,
as well as all the other subsystems obtained by permuting
the subscripts $\{1,\dots,6\}$ of the vectors $\beta_j$.

By Lemma~\ref{typeI} (i), the Gram
matrix  $(\<\beta'_i, \beta'_j>)_{i,j}$ is either
$\frac{1}{4}\id_4$ or the matrix $M_0$ defined by \eqref{m4}. We further show that the second case can
not occur. Indeed, if this were the case, then at least two of the norms $\norm(\beta'_2)$, 
$\norm(\beta'_3)$ and $\norm(\beta'_4)$ are equal and after reordering, rescaling and sign change of the vectors $\beta_j$
we may assume $\beta'_2=\beta_1+\beta_2$, $\beta'_3=\beta_3+\beta_4$, $\beta'_4=\beta_5+\beta_6$ and 
$\norm(\beta'_3)=\norm(\beta'_4)=\frac{1}{8}$, $\<\beta'_3, \beta'_4>=\frac{1}{16}$. 
If we denote by $\beta''_j:=\beta'_j$, $j=1,2$ and 
$\beta''_3=\beta_3-\beta_4$, $\beta''_4=\beta_5-\beta_6$, 
then $\{\sum_{j=1}^{4}\varepsilon_j \beta''_j\}\subset\mathcal{W}$ 
is also a subsystem of roots. Since $\{\norm(\beta''_1), \norm(\beta''_2)\}=\{\frac{1}{4}, \frac{1}{8}\}$, 
it follows again by Lemma~\ref{typeI} (i) 
that the Gram matrix  $(\<\beta''_i, \beta''_j>)_{i,j}$ is equal to $M_0$ defined by \eqref{m4}, 
so that $\norm(\beta''_4)=\norm(\beta''_3)=\frac{1}{8}=\norm(\beta'_3)=\norm(\beta'_4)$.
Thus, $\norm(\beta_1+\beta_2)=\frac14$,
$\norm(\beta_{2j-1}+\beta_{2j})=\frac18$ for $j=2,3,4$, and
\beq \label{prs1}\<\beta_3+\beta_4,\beta_5+\beta_6>=\frac1{16}.\eeq
For every $3\le j<k\le 8$, there exists $l\in\{2,3,4\}$
such that $\{2l-1,2l\}\cap\{j,k\}=\emptyset$. Let $\{s,t\}$ denote the
complement of $\{1,2,j,k,2l-1,2l\}$ in $\{1,\ldots,8\}$.
The Gram matrix
of $\{\beta_1+\beta_2,
\beta_{2l-1}+\beta_{2l},\beta_j-\beta_k,\beta_s-\beta_t\}$ has at least two
different values on the diagonal. By
Lemma~\ref{typeI} (i) again, the remaining diagonal terms
$\norm(\beta_j-\beta_k)$ and $\norm(\beta_s-\beta_t)$ are both equal to
$\frac18$. Thus
$\norm(\beta_j-\beta_k)=\frac18$ for $3\le j<k\le 8$, and similarly
$\norm(\beta_j+\beta_k)=\frac18$ for $3\le j<k\le 8$. Thus
$\<\beta_j,\beta_k>=0$ for all $3\le j<k\le 8$, contradicting
\eqref{prs1}.

It then follows that the Gram matrix of any subsystem of roots
$\{\sum_{j=1}^{4}\varepsilon_j \beta'_j\}$ as above
is $(\<\beta'_i, \beta'_j>)_{i,j}=\frac{1}{4}\id_4$.
In particular, this shows that for any $1\leq i<j\leq 6$,
$\norm(\beta_i+\beta_j)=\norm(\beta_i-\beta_j)=\frac{1}{4}$, and
$\<\alpha_1,\beta_i+\beta_j>=\<\alpha,\beta_i-\beta_j>=0$,
implying that $\<\beta_i,\beta_j>=0$ and $\<\beta_i,\alpha_1>=0$.
For any $k\neq i,j$ we also have $\norm(\beta_i+\beta_k)=\norm(\beta_j+\beta_k)=\frac{1}{4}$,
which then yields $\norm(\beta_i)=\frac{1}{8}$ for $1\le i\le 6$.
Denoting by $\beta':=\sum_{i=1}^{4}\beta'_i$, we compute
$\<\beta'-2\beta'_i,\beta'>=\frac{1}{2}$, for $1\leq i\leq 4$.
By \eqref{newroots} we obtain the new roots $\pm2\beta'_i\in\mathcal{\overline{W}}\setminus\mathcal{W}$.
Since the argument is true for any such subsystem of roots,
we have the new roots
$\{\pm2\alpha_1,\pm2(\beta_i\pm\beta_j) |\, 1\leq i<j\leq 6\}\subseteq \mathcal{\overline{W}}\setminus\mathcal{W}$,
which, by the above orthogonality relations, build the system of roots of $\so(12)\oplus\su(2)$.
It thus follows  that
$\so(12)\oplus\su(2)\subseteq\hh$.
Furthermore, $\mathcal{R}(\so(12)\oplus\su(2))\cup\mathcal{W}=\mathcal{R}(\mathfrak{e}_7)$ 
(cf. \cite[p. 56]{a}), showing that $\mathfrak{e}_7\subseteq\mathfrak{g}$.

We claim that $p=1$. Assuming that $p\geq 2$, we consider $\alpha_2\in\A\setminus\{\pm\alpha_1\}$.
The previous argument also shows that $\<\beta_i,\alpha_2>=0$ for all $1\le i\le 6$.
Denoting by $a:=\<\alpha_1,\alpha_2>$, the scalar products between $\beta:=\alpha_1+\sum_{j=1}^{6}\beta_j$ and
$\alpha_2\pm(\beta_1+\beta_2)\pm(\beta_3+\beta_4)\pm(\beta_5+\beta_6)$ take four possible values: 
$\{a\pm\frac{3}{4},a\pm\frac{1}{4}\}$, thus contradicting \eqref{prodscal}, 
which only allows 3 different values for these scalar products. Hence, $p=1$, and similarly $p'\le 1$.

We further prove that $p'=0$. Assuming the contrary, there exists $\gamma_1\neq 0$ in $\G$.
The same arguments as before show that $\<\beta_i,\gamma_1>=0$ for $1\le i\le 6$ and $\norm(\gamma_1)=\frac14$.
Denoting by $a:=\<\alpha_1,\alpha_2>$, the set of scalar products between the unit vectors $\alpha_1+\sum_{j=1}^{6}\beta_j$ and
$\gamma_1+\beta_1-\beta_2\pm(\beta_3+\beta_4)\pm(\beta_5+\beta_6)$ equals 
$\{a,a\pm\frac12\}$. By \eqref{prodscal}, one has necessarily $a=0$, {\em i.e.} $\alpha_1\perp\gamma_1$.
We then denote by $\beta_7:=\frac12(\alpha_1+\beta_1)$ and $\beta_8:=\frac12(\alpha_1-\beta_1)$.
The above relations show that $\<\beta_i,\beta_j>=0$, $\norm(\beta_i)=\frac{1}{8}$
for $1\leq i<j\leq 8$ and 
$$\mathcal{W}(\mathfrak{m})=\left\{\overset{8}{\underset{j=1}{\sum}}
\varepsilon_j\beta_j |\, \overset{8}{\underset{j=1}{\prod}}
\varepsilon_j=1\right\}_{\varepsilon\in\mathcal{E}_8}.$$
Let $\beta:=\sum_{i=1}^{8}\beta_i$. Since $\<\beta,\beta-2\beta_i-2\beta_j>=\frac12$ for $i\neq j$ and 
$\<\beta-2\beta_i-2\beta_k,\beta-2\beta_j-2\beta_k>=\frac12$ for $i\neq j\neq k\neq i$,
by \eqref{newroots} we obtain that $\{\pm2(\beta_i\pm \beta_j) |\, 1\leq i<j\leq 8\}\in 
\overline{\mathcal{W}}\setminus\mathcal{W}\subset\mathcal{R}(\hh)$. This is exactly the 
system of roots of $\so(16)$. Thus, $\so(16)\subseteq\mathfrak{h}$.
Since $\so(16)$ is simple, the restriction to $\so(16)$ of the Clifford morphism $\rho_*:\hh\to\so(12)$ must vanish. 
Moreover, the restriction to $\so(16)$ of the representations $\lambda_\pm$ from Lemma \ref{isorep} vanish, too.
Indeed, $p=p'=1$ and $\K=\H$ so their complex dimensions equal 2. Thus the isotropy representation of $G/H$ would vanish
on $\so(16)\subset\hh$, a contradiction.

As $p'=0$, Lemma \ref{isorep} shows that the isotropy representation can be written $\mm=\mu_+\otimes_\H\lambda_+$, 
so as in \eqref{hspl} we can write 
\beq\label{hspl1}\hh=\hh_0\oplus\hh_1\oplus\hh_2\eeq
with $\hh_1:=\ker(\rho_*)$, $\hh_2:=\ker(\lambda_+)$ and $\hh_0=(\hh_1\oplus\hh_2)^\perp$.
Since $p=1$, it follows that $\hh_0\oplus\hh_1\subseteq\su(2)$.
On the other hand, $\hh_0\oplus\hh_2\subseteq\mathfrak{so}(12)$ and we have proved that
$\so(12)\oplus\su(2)\subseteq\hh$. Hence, we obtain 
$\hh_2=\so(12)$, $\hh_0=0$ and $\hh_1=\su(2)$.
In particular, we have $\mathfrak{h}=\so(12)\oplus\su(2)$, and $\mathcal{R}(\gg)=\mathcal{W}(\mm)\oplus \mathcal{R}(\hh)$
is isometric to the root system of $\mathfrak{e}_7$ (cf. \cite[p. 56]{a}). We conclude that
$M=\mathrm{E}_7/\mathrm{Spin}(12)\cdot\mathrm{SU}(2)$ by \cite[Thm 6.1]{a}.

(IV) For $r=16=2q$, it follows from Proposition~\ref{wisotrop} (IV) and Proposition~\ref{linalg} (III)-(IV) 
(for the extremal case $q=8$) that (up to a sign change for one of the vectors $\beta_i$) $p=1$, $p'=0$ and 
the set of weights of the isotropy representation is given by:
\[\mathcal{W}(\mathfrak{m})=\left\{\overset{8}{\underset{j=1}{\sum}}
\varepsilon_j\beta_j |\, \overset{8}{\underset{j=1}{\prod}}
\varepsilon_j=1\right\}_{\varepsilon\in\mathcal{E}_8}.\]

By Lemma~\ref{typeI} (b) it follows that the Gram matrix
$(\<\beta_i,\beta_j>)_{ij}$ is equal to $\frac{1}{8}\id_8$.
Thus, all roots in $\mathcal{W}$ have norm $1$ and $\<\beta,
\beta_i>=\frac{1}{8}$, where $\beta:=\sum_{i=1}^{8}\beta_i$.
This yields the following values for the scalar
products for all $i,j$ and $k$ mutually distinct : $\<\beta,
\beta-2\beta_i-2\beta_j>=\<\beta-2\beta_i-2\beta_j,
\beta-2\beta_i-2\beta_k>=\frac{1}{2}$ and all other scalar products of
roots in $\mathcal{W}$ are $0$. The new roots we 
obtain by \eqref{newroots} are then $\{\pm2(\beta_i\pm \beta_j) |\, 1\leq i<j\leq 8\}$,
which build the system of roots of $\so(16)$. Thus,
$\so(16)\subseteq\mathfrak{h}$.
As $p=1$ and $p'=0$ and $\so(16)\subseteq\mathfrak{h}$, it follows with the notation from
\eqref{hspl1} that $\so(16)\subseteq\mathfrak{h}_2$.
On the other hand, $\rho_*$ maps $\hh_0\oplus\hh_2$ one-to-one into $\so(16)$ and thus
we must have equality: $\hh_0\oplus\hh_2=\so(16)$.
Consequently, $\mathcal{R}(\mathfrak{g})=\mathcal{R}(\mathfrak{so}_{16})\cup \mathcal{W}$
is isometric to the system of roots of $\mathfrak{e}_8$ (cf. \cite[p. 56]{a}), showing that 
$\mathfrak{g}=\mathfrak{e}_8$ and $M=\mathrm{E}_8/(\mathrm{Spin}(16)/\mathbb{Z}_2)$ by \cite[Thm 6.1]{a}.
\end{proof}

As already mentioned in Section 1, similar methods could be used to examine the remaining
cases $r\leq 8$. However, the arguments tend to be much more intricate as fast as the rank 
decreases. In order to keep this paper at a reasonable length, we have thus decided to 
limit our study to the extremal cases of Theorem \ref{upperbd}.

\end{document}